   \numberwithin{equation}{section}
\journal{ } 
\newtheorem{thm}{Theorem}[section]
\newtheorem{lem}[thm]{Lemma}
\newtheorem{defn}[thm]{Definition}
\newtheorem{rem}[thm]{Remark}
\newtheorem{ex}[thm]{Example}
\begin{document}
\begin{frontmatter}
\author{Tong Wu$^{a}$}
\ead{wut977@nenu.edu.cn}
\author{Yong Wang$^{b,*}$}
\ead{wangy581@nenu.edu.cn}
\cortext[cor]{Corresponding author.}
\address{$^a$Department of Mathematics, Northeastern University, Shenyang, 110819, China}
\address{$^b$School of Mathematics and Statistics, Northeast Normal University,
Changchun, 130024, China}

\title{The Einstein-Hilbert action for perturbed second-order spectral triples}
\begin{abstract}
In \cite{FM}, the higher-order spectral triple and its relative K-homology were studied. Motivated by the Kastler-Kalau-Walze theorem, we propose an extension of the Einstein-Hilbert action to the framework of higher-order spectral triples. To illustrate this construction, we introduce two second-order spectral triples and explicitly compute their respective Einstein-Hilbert action, demonstrating the applicability of our theoretical framework.
\end{abstract}
\begin{keyword}The Einstein-Hilbert action; perturbed second-order spectral triples; the noncommutative residue.

\end{keyword}
\end{frontmatter}
\section{Introduction}
 Until now, many geometers have studied noncommutative residues. In \cite{Gu,Wo}, authors found noncommutative residues are of great importance to the study of noncommutative geometry. Connes showed us that the noncommutative residue on a compact manifold $M$ coincided with the Dixmier's trace on pseudodifferential operators of order $-{\rm {dim}}M$ in \cite{Co2}. Therefore, the noncommutative residue can be used as integral of noncommutative geometry and become an important tool of noncommutative geometry. In \cite{Co1}, Connes used the noncommutative residue to derive a conformal 4-dimensional Polyakov action analogy.
Several years ago, Connes made a challenging observation that the noncommutative residue of the square of the inverse of the Dirac operator $D$ was proportional to the Einstein-Hilbert action, which we call the Kastler-Kalau-Walze theorem. In \cite{Ka}, Kastler gave a bruteforce proof of this theorem. In \cite{KW}, Kalau and Walze proved this theorem in the normal coordinates system simultaneously. The Kastler-Kalau-Walze theorem gives a spectral explanation of the gravitational action, it says that there exists a constant $c_0$, such that
 $${\rm Wres (D^{-2})}=c_0\int_M sd{\rm Vol_M},$$
where ${\rm Wres}$ denotes the noncommutative residue and $s$ is the scalar curvature. Ackermann proved that the Wodzicki residue of the square of the inverse of the Dirac operator ${\rm Wres}(D^{-2})$ in turn is essentially the second coefficient of the heat kernel expansion of $D^{2}$ in \cite{Ac}.

 In Connes' program of noncommutative geometry, the role of geometrical objects is played by spectral triples $(\mathcal{A},\mathcal{H},D)$.
Similar to the commutative case and the canonical spectral triple $(C^\infty(M),L^2(S),D)$, where $(M,g,S)$ is a closed spin
manifold and $D$ is the Dirac operator acting on the spinor bundle $S$, the spectrum of the Dirac operator $D$ of a spectral
triple $(\mathcal{A},\mathcal{H},D)$ encodes the geometrical information of spectral triple.
Many spectral triples on spin manifolds have been studied. The trilinear functional of differential one-forms for a finitely summable regular spectral triple with a noncommutative
residue has been computed recently in \cite{D2}, Dabrowski et al. demonstrated that for a canonical spectral triple over a closed
spin manifold it recovers the torsion of the linear connection, which is a first step towards linking the spectral approach
with the algebraic approach based on Levi-Civita connections. In \cite{Si1}, Sitarz proposed a new idea of conformally rescaled and curved spectral triples, which
are obtained from a real spectral triple by a nontrivial scaling of the Dirac
operator. And they computed the Wodzicki residue and the Einstein-Hilbert functional for such family on the 4-dimensional noncommutative torus. Sitarz computed the Wodzicki residue of the inverse of a conformally rescaled
Laplace operator over a 4-dimensional noncommutative torus in \cite{Si2}. In \cite{FM}, Fries M extended the notion of a spectral triple to that of a higher-order relative spectral triple, and calculate the
K-homology boundary map of the constructed relative K-homology cycle in the case of an elliptic differential operator on a compact smooth manifold with boundary to obtain a generalization of the Baum-Douglas-Taylor index theorem. Therefore, we intend to generalize the Einstein-Hilbert action to higher-order spectral triples, that is the {\bf motivation} of this article. Based on by the relationship between the Einstein-Hilbert action of general relativity and first-order spectral triples, we construct two kinds of perturbed second-order spectral triples with the noncommutative residue to explore the Einstein-Hilbert action under perturbed second-order spectral triples. The aim of this paper is to prove the following theorems.
\begin{thm}\label{1thmooo}
Let $M$ be a $2m$-dimensional oriented
compact spin manifold without boundary, then the Einstein-Hilbert action for perturbed second-order spectral triples-type I is given
 \begin{align*}
{\rm Wres}[(fD^2f)^{-m+1}]
&=2^m\int_M\bigg(-\frac{1}{12}(m-1)f^{-2m+2}s+\frac{1}{3}(m^2-3m+2)f^{-2m+1}\Delta(f)\nonumber\\
&+\frac{1}{3}f^{-2m}m(m^2-3m+2)|\nabla(f)|^2\bigg)Vol(S^{n-1})d{\rm Vol_M},
 \end{align*}
 where $\Delta(f)$ and $\nabla(f)$ denote a generalized laplacian of $f$ and the gradient to $f$ respectively.
 \end{thm}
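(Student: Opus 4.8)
The plan is to reduce the theorem to the extraction of a single homogeneous symbol term. Since $\dim M=2m$ and multiplication by $f$ has order zero, $Q:=fD^2f$ is a positive elliptic operator of order $2$, so $(fD^2f)^{-m+1}=Q^{-(m-1)}$ has order $-(2m-2)$, and by the definition of the noncommutative residue only the part of its complete symbol homogeneous of degree $-2m=-\dim M$ contributes:
\[
{\rm Wres}[Q^{-(m-1)}]=\int_M\left(\int_{|\xi|=1}{\rm tr}\big[\sigma_{-2m}(Q^{-(m-1)})(x,\xi)\big]\,d\sigma(\xi)\right)d{\rm Vol_M},
\]
up to the standard normalisation. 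Everything therefore reduces to computing $\sigma_{-2m}(Q^{-(m-1)})$, taking the fibrewise trace over the spinor bundle (whose rank $2^m$ is the source of the overall factor $2^m$), and integrating over the unit cosphere $S^{n-1}$.

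First I would record the symbol of $D^2$. By the Lichnerowicz formula $D^2=\Delta^S+\tfrac{s}{4}$, where $\Delta^S$ is the spinor connection Laplacian and $s$ the scalar curvature, so the complete symbol of $D^2$ has principal part $|\xi|^2_g\cdot{\rm Id}$, a first-order part built from the spin connection, and a zeroth-order part containing $\tfrac{s}{4}$. Applying the composition rule $\sigma(PR)\sim\sum_{\alpha}\tfrac{1}{\alpha!}\partial_\xi^\alpha\sigma(P)\,D_x^\alpha\sigma(R)$ with the order-zero factors $M_f$ on the two sides then gives the complete symbol of $Q=fD^2f$: the principal part is $q_2=f^2|\xi|^2$, while the subprincipal parts $q_1,q_0$ collect the terms in $\partial f$ and $\partial^2 f$ produced by the two multiplications together with the connection and curvature contributions of $D^2$.

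Next I would pass to the power at the level of symbols. The complete symbol of $Q^{-(m-1)}$ is obtained from the symbol calculus for complex powers (or, equivalently, by inverting $\sigma(Q)$ recursively and composing), with leading term $q_2^{-(m-1)}=f^{-2m+2}|\xi|^{-2m+2}$; I then collect the homogeneous degree $-2m$ part. To control the algebra I would work in geodesic normal coordinates centred at an arbitrary point $x_0$, where $g_{ij}(x_0)=\delta_{ij}$, the first derivatives of the metric and the Christoffel symbols vanish at $x_0$, and the only surviving curvature datum is $s$; this removes all non-invariant terms and isolates exactly the three combinations $f^{-2m+2}s$, $f^{-2m+1}\Delta(f)$ and $f^{-2m}|\nabla(f)|^2$. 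The sphere integrals
\[
\int_{|\xi|=1}d\sigma=Vol(S^{n-1}),\qquad \int_{|\xi|=1}\xi_i\xi_j\,d\sigma=\tfrac{1}{n}\delta_{ij}\,Vol(S^{n-1}),
\]
together with the fourth moment $\int_{|\xi|=1}\xi_i\xi_j\xi_k\xi_l\,d\sigma=\tfrac{1}{n(n+2)}(\delta_{ij}\delta_{kl}+\delta_{ik}\delta_{jl}+\delta_{il}\delta_{jk})Vol(S^{n-1})$, convert the $\xi$-dependence of $\sigma_{-2m}$ into the metric contractions $g^{ij}\partial_i\partial_j f$ and $g^{ij}\partial_i f\,\partial_j f$, i.e.\ into $\Delta(f)$ and $|\nabla(f)|^2$, while the spinor trace supplies the factor $2^m$.

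The main obstacle is the systematic computation of $\sigma_{-2m}(Q^{-(m-1)})$ with the correct $m$-dependence. The exponent enters through $q_2^{-(m-1)}=f^{-2m+2}|\xi|^{-2m+2}$, and the $\partial_\xi$-derivatives demanded by the composition rule bring down factors $(m-1)$ and $(m-1)(m-2)$: the scalar-curvature contribution reaches degree $-2m$ through the zeroth-order endomorphism part and so carries only a single factor $(m-1)$, whereas the $\Delta(f)$ and $|\nabla(f)|^2$ contributions require two $\xi$-derivatives acting on the symbol power and hence carry the factor $(m-1)(m-2)=m^2-3m+2$, which is precisely the prefactor appearing in the statement (and which correctly makes these two terms vanish in the classical four-dimensional case $m=2$). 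Keeping track of these combinatorial weights, of the cross terms between the $\partial f$ pieces of $q_1$ and the corrections to the inverse, and of the consistent sign and normalisation conventions throughout the normal-coordinate reduction is the delicate part of the argument; once these are organised, collecting the three invariants and carrying out the sphere and spinor-trace integrations yields the stated coefficients $-\tfrac{1}{12}(m-1)$, $\tfrac{1}{3}(m^2-3m+2)$ and $\tfrac{1}{3}m(m^2-3m+2)$.
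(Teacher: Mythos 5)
Your strategy coincides with the paper's: reduce ${\rm Wres}$ to the fibrewise integral of ${\rm tr}[\sigma_{-2m}((fD^2f)^{-m+1})]$ over the cosphere bundle, obtain the symbol of $fD^2f$ by composing the order-zero factors $f$ with the Lichnerowicz form of $D^2$, invert recursively, pass to the $(m-1)$-st power, evaluate in normal coordinates at a point, and finish with the sphere moments and the spinor trace ${\rm tr}[id]=2^m$. All of these steps match the paper's Lemmas on $\sigma_j(fD^2f)$, on $b_{-2},b_{-3},b_{-4}$ of $(fD^2f)^{-1}$, and on the integrals $\int_{|\xi|=1}\xi_j\xi_l\,\sigma(\xi)=\frac{1}{2m}\delta_{jl}Vol(S^{n-1})$.

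The gap is that the one step you yourself identify as ``the delicate part'' --- producing $\sigma_{-2m}(Q^{-(m-1)})$ with its exact $m$-dependence --- is described qualitatively but never carried out, so the coefficients $-\frac{1}{12}(m-1)$, $\frac{1}{3}(m^2-3m+2)$ and $\frac{1}{3}m(m^2-3m+2)$ are asserted rather than derived. The paper closes exactly this gap by importing a closed formula (Corollary A.2 of Dabrowski--Sitarz--Zalecki, displayed as equation (\ref{g1})) expressing $\sigma_{-2m}(\mathcal{D}^{-m+1})$ as an explicit polynomial in $b_{-2},b_{-3},b_{-4}$ and their $x$- and $\xi$-derivatives with coefficients such as $(m-1)$, $\frac{(m-1)(m-2)}{2}$ and $\frac{(m-1)(m-2)}{24}(\cdots)$; substituting the computed $b_{-j}$ into that formula is what actually generates the stated prefactors (for instance the $s$-coefficient arises as $(m-1)\left(-\frac{1}{4}+\frac{1}{3}m\cdot\frac{1}{2m}\right)=-\frac{m-1}{12}$ after the sphere integration of the $R_{\alpha a\alpha\mu}\xi_\mu\xi_a$ term). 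Without this formula or an equivalent explicit bookkeeping device, your plan cannot be checked and, in particular, your heuristic that the $\Delta(f)$ and $|\nabla f|^2$ terms carry exactly the factor $(m-1)(m-2)$ is not self-evidently complete, since cross terms such as $(b_{-2})^{m-3}(b_{-3})^2$ and the $\partial_{\xi}b_{-2}\,\partial_x b_{-3}$ contributions also feed these invariants. A minor further point: the fourth sphere moments you quote are not needed --- after restricting to $|\xi|=1$ every surviving term is at most quadratic in $\xi$, so only the second moment enters.
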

 \begin{rem}
 (1)When $f=1,$ we get the classicial Kastler-Kalau-Walze theorem.
 \begin{align*}
{\rm Wres}[D^{-2}]
&=2^m\int_M-\frac{m-1}{12}sVol(S^{n-1})d{\rm Vol_M}.
 \end{align*}

(2)When $m=2,$ we get the Einstein-Hilbert action on 4-dimensional manifolds.
 \begin{align*}
{\rm Wres}[(fD^2f)^{-1}]
&=\int_M-\frac{1}{3}f^{-2}sVol(S^{n-1})d{\rm Vol_M}.
 \end{align*}
 \end{rem}

\begin{thm}\label{666thmooo}
Let $M$ be a $2m$-dimensional oriented
compact spin manifold without boundary, then the Einstein-Hilbert action for perturbed second-order spectral triples-type II is given
 \begin{align*}
{\rm Wres}[(c(X)D^2c(X))^{-m+1}]
&=(-1)^m2^m\int_M\bigg(\frac{1}{12}(m-1)|X|^{-2m+2}s+\frac{1}{6}(m^2-3m+2)|X|^{-2m+4}\Delta(|X|^{-2})\nonumber\\
&+\frac{1}{12m}(6m^3-m^4+m^2-30m+24)|X|^{-2m+6}|\nabla(|X|^{-2})|^2\bigg)Vol(S^{n-1})d{\rm Vol_M},
 \end{align*}
 where $\Delta(|X|^{-2})$ and $\nabla(|X|^{-2})$ denote a generalized laplacian of $|X|^{-2}$ and the gradient to $|X|^{-2}$ respectively.

 \end{thm}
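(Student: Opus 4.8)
The plan is to reduce the noncommutative residue to a local computation with the symbol calculus, exactly as in the proof of Theorem~\ref{1thmooo}, and then carry out the extra bookkeeping forced by the matrix-valued conformal factor $c(X)$. Since $P:=c(X)D^2c(X)$ is elliptic of order $2$ on the $2m$-dimensional manifold $M$, the operator $P^{-m+1}$ has order $-2m+2$, and by the defining property of the Wodzicki residue
\[
\mathrm{Wres}[P^{-m+1}]=\int_M\int_{|\xi|=1}\mathrm{tr}\big[\sigma_{-2m}(P^{-m+1})(x,\xi)\big]\,d\xi\,d\mathrm{Vol}_M,
\]
so the entire answer is controlled by the homogeneous component of degree $-2m$ of the complete symbol of $P^{-m+1}$. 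First I would work in normal coordinates centered at an arbitrary point $x_0$ and trivialize the spinor bundle there, so that the metric and the Christoffel symbols vanish to first order at $x_0$; this kills the connection terms pointwise and isolates the curvature contribution, which is what ultimately produces the scalar curvature $s$.

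Next I would compute the complete symbol of $P=c(X)D^2c(X)$. Using $\sigma(D^2)=|\xi|^2+\sigma_1(D^2)+\sigma_0(D^2)$ together with the Lichnerowicz formula $D^2=\nabla^*\nabla+\tfrac{s}{4}$, and composing symbols via $\sigma(AB)\sim\sum_\alpha\tfrac{1}{\alpha!}\partial_\xi^\alpha\sigma(A)\,D_x^\alpha\sigma(B)$, the left factor $c(X)$ (being $\xi$-independent of order $0$) simply multiplies, while the right factor $c(X)$ produces, through the $\xi$-derivatives of $|\xi|^2$, the terms $2\xi_jD_{x_j}c(X)$ and $\delta_{jk}D_{x_j}D_{x_k}c(X)$. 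Using the Clifford relation $c(X)^2=-|X|^2$, this gives the leading symbol $\sigma_2(P)=-|X|^2|\xi|^2$ together with lower-order symbols carrying $c(X)c(\nabla X)$, $c(X)c(\nabla^2X)$ and the scalar curvature. The invariants $|X|^{-2}$ and its derivatives that appear in the statement enter precisely here, since the inverse is governed by $c(X)^{-1}=-c(X)/|X|^2$.

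Then I would build the symbol of the power $P^{-m+1}$. The efficient route is to compute the resolvent symbol $q(x,\xi,\lambda)=q_{-2}+q_{-3}+q_{-4}+\cdots$ from the recursive relations obtained by expanding $\sigma((P-\lambda)\circ q)=1$, starting from $q_{-2}=(-|X|^2|\xi|^2-\lambda)^{-1}$, and then to recover $P^{-m+1}=\tfrac{1}{2\pi i}\oint\lambda^{-(m-1)}(P-\lambda)^{-1}\,d\lambda$, extracting the degree $-2m$ part. Since $P$ is Laplace-type up to the conformal factor $-|X|^2$, this is equivalent to reading off the rescaled second Seeley--DeWitt coefficient $a_2\propto\mathrm{tr}(6E+s\,\mathrm{Id})$ of $-P$, which serves as a useful cross-check. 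Either way, the $m$-dependence is generated by the homogeneity combinatorics (equivalently by differentiating $\lambda^{-(m-1)}$ along the contour), and this is the origin of the polynomials $(m^2-3m+2)$ and $6m^3-m^4+m^2-30m+24$.

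Finally I would take the fiber trace and integrate over the cosphere $|\xi|=1$. The trace uses the Clifford identities $\mathrm{tr}(\mathrm{Id})=2^m$, $\mathrm{tr}(c(X)c(Y))=-2^mg(X,Y)$ and the vanishing of traces of odd products, which converts the operator-valued symbol into the scalar invariants $|X|^{-2}$, $\Delta(|X|^{-2})$ and $|\nabla(|X|^{-2})|^2$; the overall sign $(-1)^m$ arises from the $(m-1)$ powers of the negative leading symbol $-|X|^2$ together with the fiber trace of the remaining Clifford factors. The angular integrals are then evaluated with $\int_{|\xi|=1}d\xi=\mathrm{Vol}(S^{n-1})$ and $\int_{|\xi|=1}\xi_i\xi_j\,d\xi=\tfrac{1}{n}\mathrm{Vol}(S^{n-1})\delta_{ij}$, and collecting the three invariants gives the stated formula. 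I expect the main obstacle to be exactly the matrix, noncommutative bookkeeping that is absent in Theorem~\ref{1thmooo}: because $c(X)$ does not commute with $D$, its covariant derivatives $c(\nabla X)$ and $c(\nabla^2X)$ reappear throughout the resolvent recursion, so one must carry products of several Clifford elements all the way through and apply the trace identities only at the end, and it is precisely this step that pins down the nontrivial coefficient $\tfrac{1}{12m}(6m^3-m^4+m^2-30m+24)$. Keeping this organized for general $m$, rather than for a fixed low dimension, is the delicate part.
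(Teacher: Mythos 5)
Your strategy is sound and, in its skeleton, coincides with the paper's: reduce to the local formula (\ref{666}), compute the positive-order symbols of $P=c(X)D^2c(X)$ in normal coordinates at $x_0$, invert recursively to get $b_{-2},b_{-3},b_{-4}$, take Clifford traces, and integrate over the cosphere with $\int_{|\xi|=1}\xi_i\xi_j\,\sigma(\xi)=\frac{1}{2m}\delta_{ij}Vol(S^{n-1})$. Your identification of the essential new difficulty relative to Theorem \ref{1thmooo} --- that $c(X)$ is matrix-valued, so products of $c(X)$, $\partial[c(X)]$, $\partial^2[c(X)]$ must be carried through the recursion and only traced at the end --- is exactly what the paper isolates in Lemma \ref{lemaaa}. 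The one genuinely different ingredient is how you pass from the symbol of $P^{-1}$ to the degree $-2m$ part of the symbol of $P^{-m+1}$: you propose the resolvent expansion of $(P-\lambda)^{-1}$ followed by the contour integral $\frac{1}{2\pi i}\oint\lambda^{-(m-1)}(P-\lambda)^{-1}d\lambda$, whereas the paper invokes the closed combinatorial formula (\ref{g1}) (Corollary A.2 of \cite{D1}), which expresses $\sigma(P^{-m+1})_{-2m}$ directly as an explicit polynomial in $b_{-2},b_{-3},b_{-4}$ and their first and second derivatives with $m$-dependent coefficients. The two routes are equivalent; the contour-integral route is more self-contained but forces you to rederive precisely the combinatorics that (\ref{g1}) packages, and that derivation is where the coefficients $(m^2-3m+2)$ and $\frac{1}{12m}(6m^3-m^4+m^2-30m+24)$ are actually pinned down, so your proposal defers rather than resolves the hardest bookkeeping. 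One caveat on your proposed cross-check: the Seeley--DeWitt identification $\mathrm{Wres}\sim a_2\propto\mathrm{tr}(6E+s)$ applies to Laplace-type operators and to the power $-m+1$ only through the heat-trace coefficient of the correct order; since $P$ has leading symbol $-|X|^2|\xi|^2$ rather than $|\xi|^2\cdot\mathrm{id}$, the standard Gilkey formula does not apply verbatim and the check would require the conformally rescaled version (as in \cite{Si1,Si2}), so it is a heuristic rather than a proof for general $m$. With the formula (\ref{g1}) (or an equivalent derivation of it) supplied, your plan completes to the paper's proof.
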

\indent The paper is organized in the following way. In Section \ref{section:2}, firstly, we introduce higher-order spectral triples. On this basis, we construct perturbed second-order spectral triples. In Section \ref{section:3}, we define the associated Einstein-Hilbert action and combine the noncommutative residue formula, explicitly compute the Einstein-Hilbert action for two perturbed second-order spectral triples.
\section{Constructing perturbed second-order spectral triples}
\label{section:2}
 In this section, we present examples of perturbed second-order spectral triples. Firstly, we recall the definition of the higher-order spectral triple.
\begin{defn}\label{de1}\cite{FM}
 Let $A$ be a $C^*$-algebra, define a higher-order spectral triple $(\mathcal{A},\mathcal{H},\mathcal{D})$ of order $k>0$ for $a$ as consisting a $\mathbb{Z}/2$-graded separable
Hilbert space $\mathcal{H}$, an even representation $\rho: A\rightarrow \mathbb{B}(\mathcal{H})$, a dense sub-$\ast$-algebras $\mathcal{A}\subseteq A$
 and an odd self-adjoint unbounded operator $\mathcal{D}$ on $\mathcal{H}$ such that for each $a\in A$\\
 1. $\rho(a)Dom \mathcal{D}\subseteq Dom \mathcal{D}$;\\
 2. $\rho(a)(1+\mathcal{D}^2)^{-\frac{1}{2}}\in \mathbb{K}(\mathcal{H})$;\\
 3. $[\mathcal{D},\rho(a)](1+\mathcal{D}^2)^{-\frac{1}{2}+\frac{1}{2k}}$ extends to an element in $\mathbb{B}(\mathcal{H})$.
\end{defn}
Moreover, let $M$ be a closed manifold, we give two examples about higher-order spectral triples.
\begin{ex}
Let $T$ be a formally self-adjoint elliptic differential operator, then $(C^\infty(M),L^2(M;E),T)$ is a higher-order spectral triples, where $M$ is compact manifold without boundary.
 \end{ex}
 \begin{ex}
Let $T$ be a formally self-adjoint elliptic differential operator, then  $(C^\infty(M)\otimes M_N(\mathbb{C}),T\otimes Id_N,L^2(M;E)\otimes M_N(\mathbb{C}))$ is a almost commutative higher-order spectral triples.
 \end{ex}
 \begin{ex}
 Let $\lambda_1,\cdot\cdot\cdot,\lambda_n$ be real number, $D$ is the Dirac operator, then $(C^\infty(M),\mathcal{H},D^n+\lambda_1D^{n-1}+\cdot\cdot\cdot+\lambda_{n-1}D+\lambda_n)$ is a higher-order spectral triples.
 \end{ex}

For constructing perturbed second order spectral triples, we introduce the Dirac operator $D$.
Let $M$ be an $n=2m$-dimensional oriented compact closed Riemannian spin manifold with a Riemannian metric $g$ and let $\nabla^L$ be the Levi-Civita connection about $g$. Then in the local coordinates $\{x_i; 1\leq i\leq n\}$ and the
fixed orthonormal frame $\{e_1,\cdots,e_n\}$, the connection matrix $(\omega_{s,t})$ is defined by
\begin{equation}
\nabla^L(e_1,\cdots,e_n)= (e_1,\cdots,e_n)(\omega_{s,t}).
\end{equation}
\indent Suppose that $\partial_{i}$ is a natural local frame on $TM$
and $(g^{ij})_{1\leq i,j\leq n}$ is the inverse matrix associated to the metric
matrix  $(g_{ij})_{1\leq i,j\leq n}$ on $M$. Write
\begin{equation}
\widehat{c}(e_j)=\epsilon (e_j* )+\iota
(e_j*);~~
c(e_j)=\epsilon (e_j* )-\iota (e_j* ).
\end{equation}
which satisfies
\begin{align}\label{kaka}
&\widehat{c}(e_i)\widehat{c}(e_j)+\widehat{c}(e_j)\widehat{c}(e_i)=2\delta_{i}^j;~~\nonumber\\
&c(e_i)c(e_j)+c(e_j)c(e_i)=-2\delta_{i}^j;~~\nonumber\\
&c(e_i)\widehat{c}(e_j)+\widehat{c}(e_j)c(e_i)=0,
\end{align}
where $\epsilon (e_j*)$,~$\iota (e_j*)$ are the exterior and interior multiplications respectively and $c(e_j)$ is the Clifford action.
Then we have
\begin{align}
D&=\sum^n_{i=1}c(e_i)\bigg[e_i-\frac{1}{4}\sum_{s,t}\omega_{s,t}
(e_i)c(e_s)c(e_t)\bigg].
\end{align}
Using $g^{ij}=g(dx_{i},dx_{j})$, $\xi=\sum_{j}\xi_{j}dx_{j}$ and $\nabla^L_{\partial_{i}}\partial_{j}=\sum_{k}\Gamma_{ij}^{k}\partial_{k}$,  we denote that
\begin{align}\label{54}
&\sigma_{i}=-\frac{1}{4}\sum_{s,t}\omega_{s,t}
(e_i)c(e_s)c(e_t)
;~~~\xi^{j}=g^{ij}\xi_{i};~~~\Gamma^{k}=g^{ij}\Gamma_{ij}^{k};~~~\sigma^{j}=g^{ij}\sigma_{i};~~~\partial^j=g^{ij}\partial_i.
\end{align}
Consequently, the Dirac operator $D$ can be written as
\begin{equation}
D=\sum^n_{i=1}c(e_i)[e_i+\sigma_{i}].
\end{equation}
By (6a) in \cite{Ka}, we get
\begin{align}\label{lal}
{D}^2&=-\sum_{i,j}g^{ij}\Big[\partial_{i}\partial_{j}+2\sigma_{i}\partial_{j}-\Gamma_{ij}^{k}\partial_{k}+(\partial_{i}\sigma_{j})
+\sigma_{i}\sigma_{j}-\Gamma_{ij}^{k}\sigma_{k}\Big]
+\frac{1}{4}s,
\end{align}
where $s$ is the scalar curvature.\\
Then by (\ref{54}), it follows that respective symbols of $D^{2}$.
\begin{lem}\cite{Ka}\label{lemma1}Some symbols of $D^{2}$ are given:
\begin{align}
\sigma_2(D^2)&=|\xi|^2;\nonumber\\
\sigma_1(D^2)&=\sqrt{-1}(\Gamma^\mu-2\sigma^\mu)(x)\xi_\mu;\nonumber\\
\sigma_0(D^2)&=-(\partial^{\mu}\sigma_\mu+\sigma^\mu\sigma_\nu-\Gamma^{\mu}\sigma_\mu)(x)+\frac{1}{4}s(x).
\end{align}
\end{lem}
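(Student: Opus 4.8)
The plan is to read off the homogeneous components of the total symbol directly from the explicit second-order expression for $D^2$ recorded in (\ref{lal}). Since $D^2$ is a differential operator, its full symbol is obtained by the substitution $\partial_\alpha \mapsto \sqrt{-1}\,\xi_\alpha$, after which the three homogeneous pieces $\sigma_2,\sigma_1,\sigma_0$ are collected according to their degree in the covariable $\xi$. No pseudodifferential composition formula is needed here, because $D^2$ is already written out as a genuine differential operator.

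First I would isolate the leading term. The only contribution to the degree-two part comes from $-\sum_{i,j} g^{ij}\partial_i\partial_j$; under the symbol map this becomes $-\sum_{i,j} g^{ij}(\sqrt{-1}\,\xi_i)(\sqrt{-1}\,\xi_j)=\sum_{i,j} g^{ij}\xi_i\xi_j=|\xi|^2$, where the sign flip is produced by $(\sqrt{-1})^2=-1$ cancelling the overall minus sign in front of the bracket. This yields $\sigma_2(D^2)=|\xi|^2$. Next I would collect the degree-one contributions, which come from the two terms of (\ref{lal}) carrying exactly one derivative, namely $-\sum_{i,j} g^{ij}(2\sigma_i\partial_j)$ and $+\sum_{i,j,k} g^{ij}\Gamma_{ij}^k\partial_k$. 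Applying $\partial\mapsto\sqrt{-1}\,\xi$ and contracting with the raised-index abbreviations $\sigma^\mu=g^{ij}\sigma_i$ and $\Gamma^k=g^{ij}\Gamma_{ij}^k$ from (\ref{54}) gives $-2\sqrt{-1}\,\sigma^\mu\xi_\mu+\sqrt{-1}\,\Gamma^\mu\xi_\mu=\sqrt{-1}(\Gamma^\mu-2\sigma^\mu)\xi_\mu$, which is exactly $\sigma_1(D^2)$.

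Finally, the degree-zero part gathers all the terms in (\ref{lal}) that carry no derivative together with the curvature term $\frac{1}{4}s$. Using the conventions $\partial^\mu\sigma_\mu=g^{ij}\partial_i\sigma_j$ and $\Gamma^\mu\sigma_\mu=g^{ij}\Gamma_{ij}^k\sigma_k$ from (\ref{54}), the bracketed zeroth-order piece $-\sum_{i,j} g^{ij}\big[(\partial_i\sigma_j)+\sigma_i\sigma_j-\Gamma_{ij}^k\sigma_k\big]$ contracts to $-(\partial^\mu\sigma_\mu+\sigma^\mu\sigma_\nu-\Gamma^\mu\sigma_\mu)$, and adding $\frac{1}{4}s$ produces the stated $\sigma_0(D^2)$.

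The computation is elementary bookkeeping; the only points demanding care are the sign conventions in the symbol map, in particular keeping track of the factors of $\sqrt{-1}$ and the global minus sign preceding the bracket in (\ref{lal}), and the consistent use of the index-raising notation of (\ref{54}) so that every contraction with $g^{ij}$ is displayed in the compact form claimed. Since both the expression (\ref{lal}) and the resulting symbols are already established in \cite{Ka}, the argument requires no further analytic input beyond this degree-by-degree extraction.
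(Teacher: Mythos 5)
Your proposal is correct and is essentially the paper's own route: the paper states the lemma as an immediate consequence of the explicit formula (\ref{lal}) together with the abbreviations (\ref{54}), citing \cite{Ka} rather than writing out the degree-by-degree substitution $\partial_\alpha\mapsto\sqrt{-1}\,\xi_\alpha$ that you perform. Your bookkeeping of the signs and of the raised-index contractions agrees with the stated symbols (the index $\nu$ in $\sigma^\mu\sigma_\nu$ being a typographical slip for $\mu$ in the paper's statement, which you faithfully reproduce).
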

 Based on the Dirac operator, it is natural to construct two kinds of perturbed second-order spectral triples.
 \begin{ex}\label{e1}
Let $f$ be a smooth function on $M$ and $f(x)\neq 0$ for any $x\in M$, then $(C^\infty(M)\otimes M_N(\mathbb{C}), fD^2f, L^2(M,S(TM)\otimes M_N(\mathbb{C})))$ is a almost commutative second-order spectral triple. When $N=1,$ we can get the second-order spectral triple $(C^\infty(M), fD^2f, L^2(M,S(TM)))$.
 \end{ex}
 \begin{ex}\label{e2}
Let $X$ be a vector field and $X(p)\neq 0$ for any $p\in M$, and let $c(X)$ denote Clifford action on manifold $M$, then $(C^\infty(M)\otimes M_N(\mathbb{C}), c(X)D^2c(X), L^2(M,S(TM)\otimes M_N(\mathbb{C})))$ is a almost commutative second-order spectral triples.
 When $N=1,$ we can get the second-order spectral triple $(C^\infty(M), c(X)D^2c(X),\\
  L^2(M,S(TM))).$
 \end{ex}

 \section{ The computations of the Einstein-Hilbert action for perturbed second-order spectral triples}
\label{section:3}
 In this section we want to consider the Einstein-Hilbert action for two perturbed second-order spectral triples.

For the $2m-$dimensional spin manifold $M$ and the Dirac operator $D$, we have the commutative higher-order spectral triple by $(C^\infty(M), D^l, L^2(M,S(TM)))$, where $2l\geq 2$ and $\frac{2-2m}{l}$ are both an integer. By the Kastler-Kalau-Walze theorem, we obtain
\begin{align}
{\rm Wres}(D^l)^{\frac{2-2m}{l}}:=c_0\int_Msd{\rm Vol_M}.
\end{align}
This naturally leads us to the following key definition.
\begin{defn}For the general $l-$order spectral triples $(\mathcal{A},\mathcal{H},L),$ define the associated Einstein-Hilbert action is ${\rm Wres}(L^{\frac{2-2m}{l}}),$ where $\frac{2-2m}{l}$ is an integer and ${\rm Wres}$ is a trucial state on the algebra generated by $a\in \mathcal{A}$, $[L,a]$, $L^r$, $r\in Z.$
\end{defn}
Since considering the second-order spectral triples, take $l=2,$ it follows that the following noncommutative residue formula \cite{Wo1,Ka} plays a key role in our computations:
\begin{align}\label{666}
{\rm Wres}[\mathcal{D}^{-m+1}]:=\int_M\int_{|\xi|=1}{\rm tr}[\sigma_{-2m}({\mathcal{D}^{-m+1}})](x,\xi)\sigma(\xi)dx,
\end{align}
where $\sigma_{-2m}({\mathcal{D}^{-m+1}})$ denotes the ($-2m$)th order piece of the complete symbols of ${\mathcal{D}^{-m+1}}$, {\rm tr} as shorthand of trace.

In the following computations, we choose two perturbed second-order spectral triples in Example \ref{e1} and \ref{e2}, that is, take $\mathcal{D}=fD^2f$ and $\mathcal{D}=c(X)D^2c(X)$. Overall computations are based on the algorithm yielding the principal symbol of a
product of pseudo-differential operators in terms of the principal symbols of the
factors, namely,
\begin{align}\label{1111}
\sigma^{AB}(x,\xi)=\sum_\alpha\frac{(-\sqrt{-1})^\alpha}{\alpha!}\partial_{\xi_\alpha}\sigma^{A}(x,\xi)\cdot\partial_{x_\alpha}\sigma^{B}(x,\xi).
\end{align}
 Write $
~\sigma(A^{-1})=\sum^{\infty}_{j=2}b_{-j},
$
 then by the composition formula of pseudodifferential operators, we have
\begin{align}
1&=\sigma(AA^{-1})(x,\xi)\nonumber\\
&=\sum_\alpha\frac{(-\sqrt{-1})^\alpha}{\alpha!}\partial_{\xi_\alpha}\sigma(A)(x,\xi)\cdot\partial_{x_\alpha}\sigma(A^{-1})(x,\xi)\nonumber\\
&=(\sigma_2+\sigma_1+\sigma_0)(b_{-2}+b_{-3}+b_{-4}+\cdots)\nonumber\\
&~~~+\sum_\alpha(\partial_{\xi_\alpha}\sigma_2+\partial_{\xi_\alpha}\sigma_1+\partial_{\xi_\alpha}\sigma_0)(
-\sqrt{-1}\partial_{x_\alpha}q_{-2}-\sqrt{-1}\partial_{x_\alpha}b_{-3}-\sqrt{-1}\partial_{x_\alpha}b_{-4}+\cdots).
\end{align}
This leads us to the following.
\begin{align}\label{g3}
&b_{-2}=\sigma_2^{-1};\nonumber\\
&b_{-3}=-\sigma_2^{-1}[\sigma_1b_{-2}-\sqrt{-1}\partial_{\xi_a}(\sigma_2)\partial_{x_a}(b_{-2})];\nonumber\\
&b_{-4}=-\sigma_2^{-1}[\sigma_1b_{-3}+\sigma_0b_{-2}-\sqrt{-1}\partial_{\xi_a}(\sigma_1)\partial_{x_a}(b_{-2})-\sqrt{-1}\partial_{\xi_a}(\sigma_2)\partial_{x_a}(b_{-3})-\frac{1}{2}\partial_{\xi_a}\partial_{\xi_b}(\sigma_2)\partial_{x_a}\partial_{x_b}(b_{-2})].
\end{align}

Based on this, in order to calculate $\sigma_{-2m}({\mathcal{D}^{-m+1}})$, we also need the following important conclusion. It follows from Corollary A.2 in \cite{D1} that the following holds when $k=2,~l=m-1$.
\begin{align}\label{g1}
\sigma(\mathcal{D}^{-m+1})_{-2m}&=(m-1)(b_{-2})^{m-2}b_{-4}+\frac{(m-1)(m-2)}{2}(b_{-2})^{m-3}(b_{-3})^2-\sqrt{-1}\frac{(m-1)(m-2)}{2}\nonumber\\
&(b_{-2})^{m-4}\bigg[b_{-2}\partial_{\xi_a}(b_{-3})\partial_{x_a}(b_{-2})+\partial_{\xi_a}(b_{-2})\partial_{x_a}(b_{-3})+(m-3)b_{-3}\partial_{\xi_a}(b_{-2})\partial_{x_a}(b_{-2})\bigg]\nonumber\\
&-\frac{(m-1)(m-2)}{24}(b_{-2})^{m-5}\bigg[6(b_{-2})^2\partial_{\xi_a}\partial_{\xi_b}(b_{-2})\partial_{x_a}\partial_{x_b}(b_{-2})+3(m-3)(m-4)\nonumber\\
&\partial_{\xi_a}(b_{-2})\partial_{\xi_b}(b_{-2})\partial_{x_a}(b_{-2})\partial_{x_b}(b_{-2})+4(m-3)b_{-2}\bigg(\partial_{\xi_a}(b_{-2})\partial_{\xi_b}(b_{-2})\partial_{x_a}\partial_{x_b}(b_{-2})\nonumber\\
&+\partial_{\xi_a}(b_{-2})\partial_{\xi_b}\partial_{x_a}(b_{-2})\partial_{x_b}(b_{-2})+\partial_{\xi_a}\partial_{\xi_b}(b_{-2})\partial_{x_a}(b_{-2})\partial_{x_b}(b_{-2})\bigg)\bigg].
\end{align}

 \subsection{ The perturbed second-order spectral triples-tpye I}

For perturbed second-order spectral triples-tpye I $\mathcal{D}=fD^2f,$ we have
$$fD^2f=f^2D^2+f[D^2,f].$$
The computation of symbolys of $[D^2,f]$ need to use this lemma below.
\begin{lem}\cite{UW}\label{lema2}
Let $S$ be a pseudo-differential operator of order $k$ and $f$ is a smooth function, $[S,f]$ is a pseudo-differential operator of order $k-1$ with total symbol
$\sigma[S,f]\sim\sum_{j\geq 1}\sigma_{k-j}[S,f]$, where
\begin{align}
\sigma_{k-j}[S,f]=\sum_{|\beta|=1}^j\frac{D_x^\beta(f)}{\beta!}\partial_{\xi_\beta}(\sigma^S_{k-(j-|\beta|)}).
\end{align}
\end{lem}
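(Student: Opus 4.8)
The plan is to reduce the statement to the composition formula for symbols of products of pseudo-differential operators, equation (\ref{1111}), by writing the commutator as $[S,f]=Sf-fS$ and regarding the multiplication operator $f$ as a pseudo-differential operator of order $0$ whose total symbol is simply $f(x)$, with no dependence on $\xi$. This $\xi$-independence is the structural feature that makes one of the two products collapse to a single term.

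First I would compute $\sigma(fS)$. Applying (\ref{1111}) with $A=f$ and $B=S$ produces a sum of terms each containing a factor $\partial_{\xi}^{\alpha}\sigma(f)$; since $\sigma(f)=f(x)$ is $\xi$-independent, every term with $|\alpha|\geq 1$ vanishes and only the $\alpha=0$ term survives, giving $\sigma(fS)=f\,\sigma(S)$. Next I would compute $\sigma(Sf)$ by applying (\ref{1111}) with $A=S$ and $B=f$, obtaining a sum $\sum_{\alpha}\frac{(-\sqrt{-1})^{|\alpha|}}{\alpha!}\,\partial_{\xi}^{\alpha}\sigma(S)\cdot\partial_{x}^{\alpha}f$. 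Subtracting, the $\alpha=0$ contribution $f\,\sigma(S)$ of $\sigma(Sf)$ cancels exactly against $\sigma(fS)$, leaving
\[
\sigma([S,f])=\sum_{|\alpha|\geq 1}\frac{(-\sqrt{-1})^{|\alpha|}}{\alpha!}\,\partial_{x}^{\alpha}f\,\partial_{\xi}^{\alpha}\sigma(S)=\sum_{|\beta|\geq 1}\frac{D_{x}^{\beta}(f)}{\beta!}\,\partial_{\xi}^{\beta}\sigma(S),
\]
where I have used the identification $(-\sqrt{-1})^{|\beta|}\partial_{x}^{\beta}=D_{x}^{\beta}$ and renamed the multi-index. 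Since the lowest-order factor $\partial_{x}^{\beta}f$ with $|\beta|\geq 1$ lowers the $\xi$-order by at least one, the top surviving piece has order $k-1$, which already confirms the claimed order of $[S,f]$.

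It then remains to extract the homogeneous component of order $k-j$. Writing $\sigma(S)\sim\sum_{m}\sigma^{S}_{m}$ with $\sigma^{S}_{m}$ homogeneous of degree $m$ in $\xi$, the term $\partial_{\xi}^{\beta}\sigma^{S}_{m}$ is homogeneous of degree $m-|\beta|$. Imposing $m-|\beta|=k-j$ forces $m=k-(j-|\beta|)$, which is precisely the symbol $\sigma^{S}_{k-(j-|\beta|)}$ in the claim; the admissibility constraints $m\leq k$ and $|\beta|\geq 1$ then restrict the multi-index to $1\leq|\beta|\leq j$, giving the stated summation range and the formula for $\sigma_{k-j}[S,f]$. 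The only point demanding care — rather than a genuine obstacle — is the justification that (\ref{1111}) holds as an asymptotic expansion modulo smoothing operators and that regrouping it by homogeneity degree is legitimate; this is standard symbol calculus, so once the order bookkeeping is carried out carefully the result follows.
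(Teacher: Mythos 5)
Your proof is correct: the paper itself gives no proof of this lemma, merely citing \cite{UW}, and your argument via the composition formula (\ref{1111}) applied to $Sf$ and $fS$, with the $\xi$-independence of $\sigma(f)$ collapsing $\sigma(fS)$ to $f\,\sigma(S)$ and the homogeneity bookkeeping $m-|\beta|=k-j$ yielding the stated summation range, is the standard derivation and exactly what the cited reference relies on. The only cosmetic point is that the exponent in (\ref{1111}) should be read as $(-\sqrt{-1})^{|\alpha|}$ (a typo in the paper), which you have already handled correctly in identifying $\frac{(-\sqrt{-1})^{|\beta|}}{\beta!}\partial_x^\beta f=\frac{D_x^\beta(f)}{\beta!}$.
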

 Now we consider the explicit representation of the symbols of $\sigma_{-2m}[(fD^2f)^{-m+1}]$, decompose the operators $fD^2f$ by different orders as
\begin{align}
\sigma(fD^2f)=\sigma_2(fD^2f)+\sigma_1(fD^2f)+\sigma_0(fD^2f).
\end{align}
By Lemma \ref{lemma1} and Lemma \ref{lema2}, we have the following lemma.
\begin{lem}\label{lemma4}Some symbols of positive order for the $fD^2f$ are given
\begin{align}\label{g2}
\sigma_2(fD^2f)&=f^2|\xi|^2;\nonumber\\
\sigma_1(fD^2f)&=\sqrt{-1}f^2(\Gamma^\mu-2\sigma^\mu)\xi_\mu-2\sqrt{-1}f\partial_{x_j}(f)\xi^j;\nonumber\\
\sigma_0(fD^2f)&=-f^2(\partial^{\mu}\sigma_\mu+\sigma^\mu\sigma_\nu-\Gamma^{\mu}\sigma_\mu)+\frac{1}{4}f^2s+f \partial_{x_j}(f)(\Gamma^j-2\sigma^j)-f\partial_{x_j}\partial_{x_l}(f)g^{jl}.
\end{align}
\end{lem}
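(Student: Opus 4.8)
The plan is to reduce everything to the two tools already at hand, namely the symbols of $D^2$ from Lemma \ref{lemma1} and the commutator symbol calculus of Lemma \ref{lema2}. First I would record the operator identity
$$fD^2f=f^2D^2+f[D^2,f],$$
obtained by writing $D^2f=fD^2+[D^2,f]$ and multiplying on the left by $f$. This splits the computation into two pieces whose symbols can be found separately and then collected order by order.

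For the first piece, multiplication by $f^2$ is a zeroth-order operator whose symbol $f^2$ is independent of $\xi$. Feeding this into the composition formula (\ref{1111}) with $A=f^2$ and $B=D^2$, every term with $|\alpha|\geq 1$ carries a factor $\partial_{\xi_\alpha}(f^2)=0$, so only $\alpha=0$ survives and $\sigma(f^2D^2)=f^2\sigma(D^2)$. Hence each homogeneous component of $\sigma(f^2D^2)$ is simply $f^2$ times the corresponding component in Lemma \ref{lemma1}. Since $f[D^2,f]$ has order $1$, it contributes nothing at order $2$, so $\sigma_2(fD^2f)=f^2|\xi|^2$ follows at once.

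For the second piece I would apply Lemma \ref{lema2} with $S=D^2$ and $k=2$, using the convention $D_{x_\mu}=-\sqrt{-1}\partial_{x_\mu}$. The order-one symbol comes from $j=1$ (only $|\beta|=1$) and uses $\sigma_2(D^2)=|\xi|^2$ with $\partial_{\xi_\mu}|\xi|^2=2\xi^\mu$, giving $\sigma_1[D^2,f]=-2\sqrt{-1}\partial_{x_j}(f)\xi^j$; multiplying by $f$ and adding the order-one part $\sqrt{-1}f^2(\Gamma^\mu-2\sigma^\mu)\xi_\mu$ of $f^2\sigma(D^2)$ reproduces $\sigma_1(fD^2f)$. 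The order-zero symbol $\sigma_0[D^2,f]$ comes from $j=2$ and has two contributions: the $|\beta|=1$ term pairs $D_{x_\mu}(f)$ with $\partial_{\xi_\mu}\sigma_1(D^2)=\sqrt{-1}(\Gamma^\mu-2\sigma^\mu)$ to give $\partial_{x_j}(f)(\Gamma^j-2\sigma^j)$, while the $|\beta|=2$ term pairs second $x$-derivatives of $f$ against $\partial_\xi^\beta(|\xi|^2)$. Adding $f\sigma_0[D^2,f]$ to $f^2\sigma_0(D^2)$ then yields the stated $\sigma_0(fD^2f)$.

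The only genuine bookkeeping obstacle is the $|\beta|=2$ contribution. One must convert the multi-index sum into an ordinary double sum via $\sum_{|\beta|=2}\frac{1}{\beta!}\partial_x^\beta(f)\partial_\xi^\beta(|\xi|^2)=\frac{1}{2}\sum_{j,l}\partial_{x_j}\partial_{x_l}(f)\partial_{\xi_j}\partial_{\xi_l}(|\xi|^2)$, insert $\partial_{\xi_j}\partial_{\xi_l}|\xi|^2=2g^{jl}$, and account for $D_x^\beta=(-\sqrt{-1})^{|\beta|}\partial_x^\beta=-\partial_x^\beta$, which collapses this term to $-\partial_{x_j}\partial_{x_l}(f)g^{jl}$. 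Keeping the weights $1/\beta!$, the phase $(-\sqrt{-1})^{|\beta|}$, and the raised-index conventions $\xi^j=g^{ij}\xi_i$, $\Gamma^j$, $\sigma^j$ from (\ref{54}) mutually consistent across all three orders is where care is required; once the signs and index raisings are pinned down, the three displayed formulas follow by routine collection of terms.
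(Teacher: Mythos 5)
Your proposal is correct and follows exactly the paper's route: the paper likewise splits $fD^2f=f^2D^2+f[D^2,f]$ and invokes Lemma \ref{lemma1} for $\sigma(D^2)$ together with Lemma \ref{lema2} (with $S=D^2$, $k=2$) for the commutator, merely omitting the order-by-order bookkeeping that you carry out. Your handling of the $|\beta|=2$ term, including the factor $(-\sqrt{-1})^{|\beta|}$ and the weight $1/\beta!$, reproduces the stated $-f\partial_{x_j}\partial_{x_l}(f)g^{jl}$ correctly.
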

For any fixed point $x_0\in M$, choosing the normal coordinates $U$ of $x_0$ in $M$, then the following results hold.
\begin{align}\label{mmnb}
\sigma^j(x_0)=0;~~~\Gamma^k(x_0)=0;~~~\partial_{x_a}g^{\alpha\beta}(x_0)=0;~~~g^{ij}(x_0)=\delta_i^j.
\end{align}
Besides, the following results from \cite{Ka} is very important for our following calculations.
\begin{align}\label{mkmk}
&\partial_{\xi_\mu}(|\xi|^2)=2\xi^\mu;~~~~~\partial_{\xi_\mu}(|\xi|^{-2})=-2|\xi|^{-4}\xi^\mu;\nonumber\\
&\partial_{\xi_\mu}(|\xi|^{-4})=-4|\xi|^{-6}\xi^\mu;~~~~\partial_{\xi_\mu}(|\xi|^{-6})=-6|\xi|^{-8}\xi^\mu;\nonumber\\
&\partial_{x_\mu}(|\xi|^{2})=\xi^\alpha\xi^\beta\partial_{x_\mu}g^{\alpha\beta};~~~\partial_{x_\mu}(|\xi|^{-2})=-|\xi|^{-4}\xi_\alpha\xi_\beta\partial_{x_\mu}g^{\alpha\beta};\nonumber\\
&\partial_{x_\mu}(|\xi|^{-6})=-3|\xi|^{-8}\xi_\alpha\xi_\beta\partial_{x_\mu}g^{\alpha\beta};~~~\partial_{\xi_\mu}\partial_{\xi_\nu}(|\xi|^2)=2g^{\mu\nu};\nonumber\\
&\partial_{x_\mu}\partial_{x_\nu}(|\xi|^{-2})=-|\xi|^{-4}\xi_\alpha\xi_\beta\partial_{x_\mu}\partial_{x_\nu}g^{\alpha\beta}+2|\xi|^{-6}\xi_\alpha\xi_\beta\partial_{x_\mu}g^{\alpha\beta}\xi_\gamma\xi_\delta\partial_{x_\nu}g^{\gamma\delta}.
\end{align}
By (\ref{g3}), (\ref{g2})-(\ref{mkmk}), we obtain that
\begin{lem}\label{lemma34}Some symbols of negative order for the $(fD^2f)^{-1}$ are given
\begin{align}\label{g23}
b_{-2}[(fD^2f)^{-1}]&=f^{-2}|\xi|^{-2};\nonumber\\
b_{-3}[(fD^2f)^{-1}]&=-\sqrt{-1}f^{-2}|\xi|^{-4}(\Gamma^\mu-2\sigma^\mu)\xi_\mu-2\sqrt{-1}f^{-3}|\xi|^{-4}\partial_{x_j}(f)\xi^j\nonumber\\
&-2\sqrt{-1}f^{-2}|\xi|^{-6}\xi^\mu\xi_\alpha\xi_\beta\partial_{x_\mu}g^{\alpha\beta};\nonumber\\
b_{-4}[(fD^2f)^{-1}](x_0)&=-\frac{1}{4}f^{-2}|\xi|^{-4}s-f^{-3}|\xi|^{-4}\partial_{x_j}^2(f)+2f^{-4}|\xi|^{-4}(\partial_{x_j}(f))^2\nonumber\\
&+\frac{2}{3}f^{-2}|\xi|^{-6}R_{\alpha a\alpha\mu}(x_0)\xi_\mu\xi_a-8f^{-4}|\xi|^{-6}\partial_{x_j}(f)\partial_{x_a}(f)\xi_j\xi_a\nonumber\\
&+4f^{-3}|\xi|^{-6}\partial_{x_a}\partial_{x_j}(f)\xi_j\xi_a.
\end{align}
\end{lem}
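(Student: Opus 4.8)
The plan is to apply the recursive relations in (\ref{g3}) directly, feeding in the positive-order symbols of $fD^2f$ from Lemma \ref{lemma4} together with the elementary differentiation identities for powers of $|\xi|$ recorded in (\ref{mkmk}). Because each $b_{-j}$ in (\ref{g3}) is expressed through the symbols $\sigma_k(fD^2f)$ and the previously obtained $b_{-j'}$ with $j'<j$, the computation is carried out in increasing order of $j$. The leading term is immediate: inverting $\sigma_2(fD^2f)=f^2|\xi|^2$ gives $b_{-2}=f^{-2}|\xi|^{-2}$.

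For $b_{-3}$ I would substitute $\sigma_1(fD^2f)$ and the known $b_{-2}$ into the second line of (\ref{g3}). Multiplying $\sigma_1 b_{-2}$ by $-\sigma_2^{-1}=-f^{-2}|\xi|^{-2}$ produces the first two summands of the stated expression, while the term $-\sqrt{-1}\,\partial_{\xi_a}(\sigma_2)\partial_{x_a}(b_{-2})$, upon using $\partial_{\xi_a}(f^2|\xi|^2)=2f^2\xi^a$ and the identity $\partial_{x_\mu}(|\xi|^{-2})=-|\xi|^{-4}\xi_\alpha\xi_\beta\partial_{x_\mu}g^{\alpha\beta}$ from (\ref{mkmk}), yields the remaining summand $-2\sqrt{-1}\,f^{-2}|\xi|^{-6}\xi^\mu\xi_\alpha\xi_\beta\partial_{x_\mu}g^{\alpha\beta}$. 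I keep $b_{-3}$ in its unspecialized form, since its first $x$- and $\xi$-derivatives are needed in the next step.

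The decisive computation is $b_{-4}$ evaluated at the fixed point $x_0$, obtained from the third line of (\ref{g3}). I would assemble the five contributions in turn --- $\sigma_1 b_{-3}$, $\sigma_0 b_{-2}$, $-\sqrt{-1}\,\partial_{\xi_a}(\sigma_1)\partial_{x_a}(b_{-2})$, $-\sqrt{-1}\,\partial_{\xi_a}(\sigma_2)\partial_{x_a}(b_{-3})$, and $-\tfrac{1}{2}\partial_{\xi_a}\partial_{\xi_b}(\sigma_2)\partial_{x_a}\partial_{x_b}(b_{-2})$ --- and then multiply their sum by $-f^{-2}|\xi|^{-2}$. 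The normal-coordinate relations (\ref{mmnb}), that is $\sigma^j(x_0)=0$, $\Gamma^k(x_0)=0$, $\partial_{x_a}g^{\alpha\beta}(x_0)=0$ and $g^{ij}(x_0)=\delta_i^j$, cause every term linear in the connection to drop after evaluation at $x_0$ and organize the survivors by their powers of $f$ and $|\xi|$. The explicit scalar-curvature piece $\tfrac{1}{4}f^2 s$ inside $\sigma_0$ immediately delivers the summand $-\tfrac{1}{4}f^{-2}|\xi|^{-4}s$, while the four purely $f$-dependent summands --- those in $\partial_{x_j}^2(f)$, $(\partial_{x_j}(f))^2$, and the $\xi$-weighted $\partial_{x_j}(f)\partial_{x_a}(f)\xi_j\xi_a$ and $\partial_{x_a}\partial_{x_j}(f)\xi_j\xi_a$ --- arise from the $f$-perturbation carried by $\sigma_0$, $\sigma_1$, $b_{-3}$ and their derivatives.

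The main obstacle is extracting the Riemann-curvature summand $\tfrac{2}{3}f^{-2}|\xi|^{-6}R_{\alpha a\alpha\mu}(x_0)\xi_\mu\xi_a$ with the correct coefficient. This term does not originate in $\sigma_0$ but in the two geometric derivative pieces: although the first metric derivatives vanish at $x_0$, the factor $\partial_{x_a}(b_{-3})$ (which contains $\partial_{x_\mu}g^{\alpha\beta}$) and the factor $\partial_{x_a}\partial_{x_b}(b_{-2})$ both reintroduce second derivatives of the metric. In the latter, the cross terms involving $\partial(f^{-2})\,\partial(|\xi|^{-2})$ die at $x_0$, and using the final identity of (\ref{mkmk}) for $\partial_{x_\mu}\partial_{x_\nu}(|\xi|^{-2})$ only the $\partial_{x_\mu}\partial_{x_\nu}g^{\alpha\beta}$ part survives. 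Invoking the normal-coordinate expansion that expresses $\partial_{x_a}\partial_{x_b}g^{\alpha\beta}(x_0)$ through the curvature tensor, and contracting the indices carefully, produces the factor $R_{\alpha a\alpha\mu}$ with coefficient $\tfrac{2}{3}$. The rest is bookkeeping: collecting like terms and confirming that the purely metric part reduces to exactly the two curvature summands, which is checked against the unperturbed case $f\equiv 1$ recovering the classical Kastler-Kalau-Walze symbol.
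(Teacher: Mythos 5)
Your proposal follows exactly the route the paper takes: the paper offers no written proof beyond invoking the recursion (\ref{g3}) together with Lemma \ref{lemma4}, the normal-coordinate relations (\ref{mmnb}), the identities (\ref{mkmk}), and the normal-coordinate expansion of the second derivatives of $g^{\alpha\beta}$ in terms of curvature, which is precisely your plan. One small bookkeeping caveat: the coefficient $-2$ of the $f^{-3}|\xi|^{-4}\partial_{x_j}(f)\xi^j$ term in $b_{-3}$ does not come from $\sigma_1 b_{-2}$ alone (that piece contributes $+2$ after multiplying by $-\sigma_2^{-1}$) but from cancellation with a $-4$ produced by $\sqrt{-1}\,\sigma_2^{-1}\partial_{\xi_a}(\sigma_2)\partial_{x_a}(b_{-2})$ through $\partial_{x_a}(f^{-2})$, so your attribution of which summand comes from which term is slightly off even though the method and the final expressions are right.
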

Substituting (\ref{g2}) and (\ref{g23}) into (\ref{g1}), the following result can be obtained.
\begin{align}\label{g13}
\sigma[(fD^2f)^{-m+1}]_{-2m}(x_0)&=-\frac{m-1}{4}f^{-2m+2}|\xi|^{-2m}s\nonumber\\
&-(m-1)^2f^{-2m+1}|\xi|^{-2m}\partial_{x_j}^2(f)\nonumber\\
&+\frac{1}{3}m(m-1)f^{-2m+2}|\xi|^{-2m-2}R_{\alpha a\alpha\mu}(x_0)\xi_\mu\xi_a\nonumber\\
&+\frac{1}{3}m(4m^2-9m+5)f^{-2m}|\xi|^{-2m}(\partial_{x_j}(f))^2\nonumber\\
&+\frac{2}{3}m(2m^2-3m+1)f^{-2m+1}|\xi|^{-2m-2}\partial_{x_j}\partial_{x_l}(f)\xi_j\xi_l\nonumber\\
&-2m^2(m-1)^2f^{-2m}|\xi|^{-2m-2}\partial_{x_a}(f)\partial_{x_b}(f)\xi_a\xi_b.
\end{align}

Next, we need to take the trace of the above result and integrate them. In which a relatively important integral formula is used as follows.
\begin{align}\label{wer}
&\int_{|\xi|=1}\xi_j\xi_l \sigma(\xi)=\frac{1}{2m}\delta_{jl}Vol(S^{n-1}).
\end{align}
The results of the integration with respect to each of (\ref{g13}) are as follows.
\begin{lem}\label{ppp} The following identities hold:
\begin{align}\label{ppo}
&\int_{|\xi|=1}{\rm tr}\bigg(f^{-2m+2}|\xi|^{-2m}s\bigg)\sigma(\xi)=f^{-2m+2}s{\rm tr}[id]Vol(S^{n-1});\nonumber\\
&\int_{|\xi|=1}{\rm tr}\bigg(f^{-2m+2}|\xi|^{-2m-2}R_{\alpha a\alpha \mu}(x_0)\xi_a\xi_\mu\bigg)\sigma(\xi)=\frac{1}{2m}f^{-2m+2}s{\rm tr}[id]Vol(S^{n-1});\nonumber\\
&\int_{|\xi|=1}{\rm tr}\bigg(f^{-2m+1}|\xi|^{-2m}\sum_{j}\partial_{x_j}^2(f)\bigg)\sigma(\xi)=-f^{-2m+1}\Delta(f){\rm tr}[id]Vol(S^{n-1});\nonumber\\
&\int_{|\xi|=1}{\rm tr}\bigg(f^{-2m}|\xi|^{-2m}\sum_{j}(\partial_{x_j}(f))^2\bigg)\sigma(\xi)=f^{-2m}|\nabla(f)|^2{\rm tr}[id]Vol(S^{n-1});\nonumber\\
&\int_{|\xi|=1}{\rm tr}\bigg(f^{-2m+1}|\xi|^{-2m-2}\sum_{j,l}\partial_{x_j}\partial_{x_l}(f)\xi_j\xi_l\bigg)\sigma(\xi)=-\frac{1}{2m}f^{-2m+1}\Delta(f){\rm tr}[id]Vol(S^{n-1});\nonumber\\
&\int_{|\xi|=1}{\rm tr}\bigg(f^{-2m}|\xi|^{-2m-2}\sum_{j,l}\partial_{x_j}(f)\partial_{x_l}(f)\xi_j\xi_l\bigg)\sigma(\xi)=\frac{1}{2m}f^{-2m}|\nabla(f)|^2{\rm tr}[id]Vol(S^{n-1}),
\end{align}
where $\Delta(f)$ and $\nabla(f)$ denote a generalized laplacian of $f$ and the gradient to $f$ respectively.
\end{lem}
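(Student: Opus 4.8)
The plan is to evaluate the six integrals by exploiting three elementary features shared by all of them: the integration runs over the unit cosphere $|\xi|=1$, the geometric coefficients in each integrand depend only on $x$, and the spinor trace of a scalar quantity is that scalar times ${\rm tr}[id]$. First I would note that on $|\xi|=1$ every factor $|\xi|^{-2m}$ or $|\xi|^{-2m-2}$ equals $1$ and may be dropped. The surviving coefficients — the powers of $f$, the scalar curvature $s$, and the derivative expressions $\partial_{x_j}^2(f)$ and $(\partial_{x_j}(f))^2$ — act as multiples of the identity on the spinor bundle, so they pull out of both the trace and the integral, leaving the factor ${\rm tr}[id]$ and the measure $\int_{|\xi|=1}\sigma(\xi)=Vol(S^{n-1})$. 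This disposes of the first identity at once; the third and fourth follow the same way after reading off, in the normal coordinates (\ref{mmnb}) where $g^{ij}(x_0)=\delta_i^j$, the identifications $\Delta(f)(x_0)=-\sum_j\partial_{x_j}^2(f)$ and $|\nabla(f)|^2(x_0)=\sum_j(\partial_{x_j}(f))^2$.

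For the remaining identities the integrand carries an explicit quadratic factor $\xi_a\xi_\mu$, and here the averaging formula (\ref{wer}) is decisive: under the integral it replaces $\xi_j\xi_l$ by $\tfrac{1}{2m}\delta_{jl}Vol(S^{n-1})$. Contracting $\tfrac{1}{2m}\delta_{jl}$ against the Hessian $\partial_{x_j}\partial_{x_l}(f)$ collapses the double sum to $\tfrac{1}{2m}\sum_j\partial_{x_j}^2(f)=-\tfrac{1}{2m}\Delta(f)$ at $x_0$, giving the fifth identity, and the same contraction against $\partial_{x_j}(f)\partial_{x_l}(f)$ yields $\tfrac{1}{2m}\sum_j(\partial_{x_j}(f))^2=\tfrac{1}{2m}|\nabla(f)|^2$, giving the sixth.

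The one line that needs genuine bookkeeping is the second identity. After applying (\ref{wer}) to $R_{\alpha a\alpha\mu}(x_0)\xi_a\xi_\mu$ the integrand becomes $\tfrac{1}{2m}\sum_{\alpha,a}R_{\alpha a\alpha a}(x_0)$, and I would then invoke the curvature contractions $\sum_\alpha R_{\alpha a\alpha\mu}=Ric_{a\mu}$ and $s=\sum_a Ric_{aa}=\sum_{\alpha,a}R_{\alpha a\alpha a}$ to rewrite this as $\tfrac{1}{2m}s$. The main obstacle, modest as it is, sits precisely here: one must fix the curvature sign convention so that the $R_{\alpha a\alpha\mu}$ term arising in $b_{-4}$ (Lemma \ref{lemma34}) traces to the scalar curvature with the sign that makes it combine coherently with the $-\tfrac{m-1}{4}s$ term, and one must likewise pin down the sign in the generalized Laplacian so that the third and fifth identities are mutually consistent. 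Once the conventions of (\ref{mmnb}) and the Ricci/scalar contraction are fixed, each of the six lines reduces to a single substitution.
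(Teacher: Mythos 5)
Your proposal is correct and follows exactly the route the paper intends (the paper gives no explicit proof of Lemma \ref{ppp}, relying tacitly on the averaging formula (\ref{wer}), the normal-coordinate identities (\ref{mmnb}), and the Ricci contraction $\sum_{\alpha,a}R_{\alpha a\alpha a}=s$). Your explicit attention to the sign conventions for $\Delta(f)=-\sum_j\partial_{x_j}^2(f)$ at $x_0$ and for the curvature trace is exactly the bookkeeping needed to make the stated identities consistent with one another.
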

By Lemma \ref{ppp}, integrate each term in (\ref{g1}) and then substitute into (\ref{666}) to obtain the following theorem.
\begin{thm}\label{thmooo}
Let $M$ be a $2m$-dimensional oriented
compact spin manifold without boundary, then the Einstein-Hilbert action for perturbed second-order spectral triples-type I is given
 \begin{align}
{\rm Wres}[(fD^2f)^{-m+1}]
&=2^m\int_M\bigg(-\frac{1}{12}(m-1)f^{-2m+2}s+\frac{1}{3}(m^2-3m+2)f^{-2m+1}\Delta(f)\nonumber\\
&+\frac{1}{3}f^{-2m}m(m^2-3m+2)|\nabla(f)|^2\bigg)Vol(S^{n-1})d{\rm Vol_M}.
 \end{align}
 \end{thm}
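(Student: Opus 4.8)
The plan is to assemble the pieces already established, since the symbolic calculus has all been done: I will feed the explicit symbol (\ref{g13}) into the noncommutative residue formula (\ref{666}) and then integrate using Lemma \ref{ppp}. Concretely, I start from
\[
{\rm Wres}[(fD^2f)^{-m+1}] = \int_M\int_{|\xi|=1}{\rm tr}\big[\sigma_{-2m}((fD^2f)^{-m+1})\big](x_0,\xi)\,\sigma(\xi)\,dx,
\]
and substitute the six-term expression for $\sigma[(fD^2f)^{-m+1}]_{-2m}(x_0)$ from (\ref{g13}). Because $b_{-2},b_{-3},b_{-4}$ were computed in Lemmas \ref{lemma4} and \ref{lemma34} and passed through (\ref{g1}), the only remaining work is to take the fiber trace and perform the integration over the unit cosphere.

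Next I would take the trace over the spinor fiber. On a $2m$-dimensional spin manifold the spinor bundle has rank $2^m$, so ${\rm tr}[id]=2^m$; each of the six terms of (\ref{g13}) carries this factor and hence produces the overall prefactor $2^m$. I then apply Lemma \ref{ppp} term by term. The first and third terms (carrying $s$ and $R_{\alpha a\alpha\mu}\xi_a\xi_\mu$) both integrate to multiples of $f^{-2m+2}s$, the third via the second identity of Lemma \ref{ppp}, which converts the Riemann contraction into $\tfrac{1}{2m}s$. The second and fifth terms integrate to multiples of $f^{-2m+1}\Delta(f)$, using the convention $\sum_j\partial_{x_j}^2(f)=-\Delta(f)$ at $x_0$ together with (\ref{wer}) for the $\xi_j\xi_l$ factor. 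The fourth and sixth integrate to multiples of $f^{-2m}|\nabla(f)|^2$, again using (\ref{wer}) for the $\xi_a\xi_b$ factor in the sixth.

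The final step is to collect like terms. Pairing the two scalar-curvature contributions gives $-\tfrac{m-1}{4}+\tfrac{1}{6}(m-1)=-\tfrac{1}{12}(m-1)$. Pairing the two Laplacian contributions and using $2m^2-3m+1=(2m-1)(m-1)$ gives $(m-1)^2-\tfrac{1}{3}(2m-1)(m-1)=\tfrac{1}{3}(m-1)(m-2)=\tfrac{1}{3}(m^2-3m+2)$. Pairing the two gradient contributions, with $4m^2-9m+5=(4m-5)(m-1)$, gives $\tfrac{1}{3}m(4m^2-9m+5)-m(m-1)^2=\tfrac{1}{3}m(m^2-3m+2)$. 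These three combined coefficients are exactly those appearing in the statement.

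I do not expect a genuine obstacle: the conceptual and computational weight lies entirely in deriving (\ref{g13}) and Lemma \ref{ppp}, which I am assuming. The one place demanding care is the coefficient bookkeeping of the last paragraph, where several near-cancellations occur and the polynomial factorizations must be tracked exactly, together with the sign and normalization conventions relating the coordinate expressions $\sum_j\partial_{x_j}^2(f)$ and $\sum_j(\partial_{x_j}(f))^2$ at $x_0$ to the invariants $\Delta(f)$ and $|\nabla(f)|^2$. Finally, since $g^{ij}(x_0)=\delta_i^j$ in normal coordinates one has $dx=d{\rm Vol_M}$ at the center, and because the integrand is expressed through the coordinate-free quantities $s$, $\Delta(f)$ and $|\nabla(f)|^2$, the pointwise identity at $x_0$ holds at every point of $M$; integrating over $M$ then yields the stated global formula.
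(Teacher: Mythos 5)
Your proposal is correct and follows essentially the same route as the paper: substitute the computed symbol (\ref{g13}) into the residue formula (\ref{666}), take the fiber trace with ${\rm tr}[id]=2^m$, integrate term by term via Lemma \ref{ppp} and (\ref{wer}), and collect coefficients. Your three coefficient combinations ($-\tfrac{m-1}{4}+\tfrac{m-1}{6}=-\tfrac{m-1}{12}$, $(m-1)^2-\tfrac{1}{3}(2m-1)(m-1)=\tfrac{1}{3}(m^2-3m+2)$, and $\tfrac{1}{3}m(4m^2-9m+5)-m(m-1)^2=\tfrac{1}{3}m(m^2-3m+2)$) all check out against the stated result.
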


 \subsection{ The perturbed second-order spectral triples-tpye II}
For perturbed second-order spectral triples-tpye II $\mathcal{D}=c(X)D^2c(X),$ we need to consider the explicit representation of the symbols of $\sigma_{-2m}[(c(X)D^2c(X))^{-m+1}]$. Decompose the operators $c(X)D^2c(X)$ by different orders as
\begin{align}
\sigma[c(X)D^2c(X)]=\sigma_2[c(X)D^2c(X)]+\sigma_1[c(X)D^2c(X)]+\sigma_0[c(X)D^2c(X)].
\end{align}
To obtain the different orders of symbol of $c(X)D^2c(X)$, we act $c(X)$ on (\ref{lal}). According to the Leibniz rule,  we can obtain
$$\partial_j\circ c(X)=c(X)\partial_j+\partial_j[c(X)].$$
Moreover, the following can be obtained.
\begin{align}
\partial_i \partial_j \circ c(X)=c(X)\partial_i \partial_j+\partial_i[c(X)]\partial_j+\partial_j[c(X)]\partial_i+\partial_i \partial_j[c(X)].
\end{align}
Consequently, we obtain that
\begin{align}
c(X)D^2c(X)&=g^{ij}\bigg(|X|^2\partial_i\partial_j-c(X)\partial_i[c(X)]\partial_j-c(X)\partial_j[c(X)]\partial_i+2|X|^2\sigma_i\partial_j-|X|^2\Gamma^k_{ij}\partial_k-c(X)\partial_i \partial_j[c(X)]\nonumber\\
&-2c(X)\sigma_i \partial_j[c(X)]+c(X)\Gamma^k_{ij} \partial_k[c(X)]+|X|^2\sigma_i\sigma_j-|X|^2\Gamma^k_{ij}\sigma_k+|X|^2(\partial_i\sigma_j)\bigg)-\frac{1}{4}|X|^2s.
\end{align}
Using $\sigma(\partial_j)=\sqrt{-1}\xi_j$, we get the lemma as follows.
\begin{lem}\label{glemma4}Some symbols of positive order for the $c(X)D^2c(X)$ are given
\begin{align}\label{g4}
\sigma_2[c(X)D^2c(X)]&=-|X|^2|\xi|^2;\nonumber\\
\sigma_1[c(X)D^2c(X)]&=-\sqrt{-1}|X|^2(\Gamma^\mu-2\sigma^\mu)\xi_\mu-\sqrt{-1}c(X)\partial^i[c(X)]\xi_j-\sqrt{-1}c(X)\partial^j[c(X)]\xi_i;\nonumber\\
\sigma_0[c(X)D^2c(X)]&=|X|^2(\partial_j\sigma_j+\sigma^j\sigma_j-\Gamma^k\sigma_k)-\frac{1}{4}|X|^2s-c(X)\partial^j\partial_j[c(X)]-2c(X)\sigma^j\partial_j[c(X)]\nonumber\\
&+c(X)\Gamma^k\partial_k[c(X)].
\end{align}
\end{lem}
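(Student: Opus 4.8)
The plan is to reduce the computation to a bookkeeping exercise resting on two ingredients: the operator-level formula (\ref{lal}) for $D^2$ and the single Clifford identity $c(X)^2=-|X|^2$, which follows from (\ref{kaka}) by writing $X=\sum_i X_ie_i$ and computing $c(X)^2=\frac{1}{2}\sum_{i,j}X_iX_j\big(c(e_i)c(e_j)+c(e_j)c(e_i)\big)=-\sum_iX_i^2=-|X|^2$. First I would substitute (\ref{lal}) into $c(X)D^2c(X)$ and commute every derivative occurring in $D^2$ past the right-hand factor $c(X)$, using $\partial_j\circ c(X)=c(X)\partial_j+\partial_j[c(X)]$ and its second-order analogue recorded just above the statement. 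Whenever the two Clifford factors meet with no derivative landing on the right-hand $c(X)$, they collapse via $c(X)^2=-|X|^2$; this is precisely what produces the overall sign flip relative to the symbols in Lemma \ref{lemma1} and the explicit $|X|^2$ prefactors. Collecting the resulting terms reproduces the operator expansion displayed immediately before the statement.

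Second, I would pass to symbols through $\sigma(\partial_j)=\sqrt{-1}\xi_j$ and sort the terms by homogeneity degree in $\xi$. The degree-two part comes solely from $g^{ij}|X|^2\partial_i\partial_j$, whose symbol is $g^{ij}|X|^2(\sqrt{-1}\xi_i)(\sqrt{-1}\xi_j)=-|X|^2|\xi|^2$, giving $\sigma_2$. The degree-one part gathers every term carrying a single surviving derivative: the piece $2|X|^2\sigma_i\partial_j-|X|^2\Gamma_{ij}^k\partial_k$ inherited from the order-one part of $D^2$, together with the two cross terms $-c(X)\partial_i[c(X)]\partial_j-c(X)\partial_j[c(X)]\partial_i$ in which exactly one derivative has already differentiated $c(X)$; contracting with $g^{ij}$ and invoking the definitions $\sigma^j=g^{ij}\sigma_i$, $\Gamma^k=g^{ij}\Gamma_{ij}^k$, $\partial^j=g^{ij}\partial_i$ from (\ref{54}) yields $\sigma_1$. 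Finally, the degree-zero part collects all terms with no surviving derivative: the scalar-curvature contribution $-\frac14|X|^2s$, the zeroth-order piece $|X|^2(\partial_j\sigma_j+\sigma^j\sigma_j-\Gamma^k\sigma_k)$ descending from $\sigma_0(D^2)$, and the three genuinely new terms $-c(X)\partial^j\partial_j[c(X)]-2c(X)\sigma^j\partial_j[c(X)]+c(X)\Gamma^k\partial_k[c(X)]$ in which both available differentiations have acted on the right-hand $c(X)$. This is $\sigma_0$.

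The step I expect to demand the most care is the noncommutativity bookkeeping in the subleading terms. Unlike the leading symbol, where $|\xi|^2$ is scalar and passes freely through $c(X)$, the order-one and order-zero contributions involve the matrix-valued endomorphisms $\sigma_i$ and the one-derivative factors $\partial_j[c(X)]$, so the operator ordering of $c(X)$, $\sigma_i$, and the Clifford derivatives must be preserved throughout; in particular one cannot replace $c(X)\sigma_ic(X)$ by $-|X|^2\sigma_i$ without justifying the commutation through the Clifford relations (\ref{kaka}). Keeping this ordering intact while correctly attributing each $\partial$ either to a $\xi$-factor (a surviving derivative) or to a derivative of $c(X)$ is the crux of the argument; once the terms are correctly partitioned by degree, the identification with the three stated symbols is immediate.
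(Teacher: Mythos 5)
Your proposal follows essentially the same route as the paper: expand $D^2$ via (\ref{lal}), push the derivatives through the right-hand $c(X)$ by the Leibniz rule, collapse the underived Clifford pairs via $c(X)^2=-|X|^2$, and read off the symbols by homogeneity degree using $\sigma(\partial_j)=\sqrt{-1}\xi_j$. Your caution about the ordering of $c(X)$ against $\sigma_i$ is well placed --- the paper silently replaces $c(X)\sigma_i\cdots c(X)$ by $-|X|^2\sigma_i\cdots$, which is only harmless because $\sigma_i$ vanishes at the base point of the normal coordinates used in the subsequent computation --- but this does not change the fact that the two arguments coincide.
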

By (\ref{g3}), (\ref{mmnb}), (\ref{mkmk}) and (\ref{g4}), we obtain that, we have
\begin{lem}\label{glemma5}Some symbols of negative order for the $(c(X)D^2c(X))^{-1}$ are given
\begin{align}\label{g5}
b_{-2}[(c(X)D^2c(X))^{-1}]&=-|X|^{-2}|\xi|^{-2};\nonumber\\
b_{-3}[(c(X)D^2c(X))^{-1}]&=\sqrt{-1}|X|^{-2}|\xi|^{-4}(\Gamma^k-2\sigma^k)\xi_k -2\sqrt{-1}|\xi|^{-4}\partial_{x_j}(|X|^{-2})\xi^j\nonumber\\
&+2\sqrt{-1}|X|^{-2}|\xi|^{-6}\xi^\mu\xi_\alpha\xi_\beta\partial_{x_\mu}g^{\alpha\beta}+2\sqrt{-1}|X|^{-4}|\xi|^{-4}c(X)\partial^j[c(X)]\xi_j;\nonumber\\
b_{-4}[(c(X)D^2c(X))^{-1}](x_0)&=\frac{1}{4}|X|^{-2}|\xi|^{-4}s-\frac{2}{3}|X|^{-2}|\xi|^{-6}R_{\alpha a\alpha\mu}(x_0)\xi_\mu\xi_a+4|\xi|^{-6}\partial_{x_a}\partial_{x_\mu}(|X|^{-2})\xi_a\xi_\mu\nonumber\\
&-|\xi|^{-4}\partial_{x_a}^2(|X|^{-2})+4|X|^{-6}|\xi|^{-6}c(X)\partial_{x_a}[c(X)]c(X)\partial_{x_j}[c(X)]\xi_a\xi_j\nonumber\\
&-4|X|^{-4}|\xi|^{-6}\partial_{x_a}[c(X)]\partial_{x_j}[c(X)]\xi_a\xi_j+2|X|^{-2}|\xi|^{-4}\partial_{x_j}(|X|^{-2})c(X)\partial_{x_j}[c(X)]\nonumber\\
&+|X|^{-4}|\xi|^{-4}c(X)\partial_{x_j}\partial_{x_j}[c(X)]-12|X|^{-2}|\xi|^{-6}\partial_{x_j}(|X|^{-2})c(X)\partial_{x_a}[c(X)]\xi_a\xi_j\nonumber\\
&-4|X|^{-4}|\xi|^{-6}c(X)\partial_{x_a}\partial_{x_j}[c(X)]\xi_a\xi_j.
\end{align}
\end{lem}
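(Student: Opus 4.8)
The plan is to obtain the three symbols $b_{-2}$, $b_{-3}$, $b_{-4}$ of $(c(X)D^2c(X))^{-1}$ by substituting the positive-order symbols $\sigma_2,\sigma_1,\sigma_0$ of $c(X)D^2c(X)$ from Lemma \ref{glemma4} into the recursive inversion formulas (\ref{g3}). All computations are carried out in the normal coordinates of the fixed point $x_0$, so the vanishing relations (\ref{mmnb}) apply, and the differentiation rules (\ref{mkmk}) for powers of $|\xi|$ are used repeatedly. The base case is immediate: since $c(X)^2=-|X|^2$ forces $\sigma_2[c(X)D^2c(X)]=-|X|^2|\xi|^2$ to be a scalar multiple of the identity, its inverse is
\[
b_{-2}=\sigma_2^{-1}=-|X|^{-2}|\xi|^{-2},
\]
which is the first line of (\ref{g5}). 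Note that this is where the overall sign differs from the type~I case: the Clifford relation $c(X)^2=-|X|^2$ replaces the scalar $f^2$ by $-|X|^2$.

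For $b_{-3}$ I substitute the second line of (\ref{g4}) and the value of $b_{-2}$ into the second recursion in (\ref{g3}). The ingredients are $\partial_{\xi_a}(\sigma_2)=-2|X|^2\xi^a$, from $\partial_{\xi_\mu}(|\xi|^2)=2\xi^\mu$ in (\ref{mkmk}), and $\partial_{x_a}(b_{-2})$, which by the product rule splits into a piece carrying $\partial_{x_a}(|X|^{-2})$ and a piece carrying $\partial_{x_a}(|\xi|^{-2})=-|\xi|^{-4}\xi_\alpha\xi_\beta\partial_{x_a}g^{\alpha\beta}$. Multiplying through by $-\sigma_2^{-1}=|X|^{-2}|\xi|^{-2}$ and collecting terms produces exactly four contributions: the connection piece $(\Gamma^k-2\sigma^k)\xi_k$ coming from the first term of $\sigma_1$, the $\partial_{x_j}(|X|^{-2})$ piece, the metric-derivative piece, and the Clifford piece $c(X)\partial^j[c(X)]\xi_j$ coming from the extra terms $-\sqrt{-1}c(X)\partial^{\,\cdot}[c(X)]\xi_{\,\cdot}$ in $\sigma_1$ (the two such terms combining to give the factor $2$ in (\ref{g5})). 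This reproduces the second line of (\ref{g5}).

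The substantial step is $b_{-4}$, computed at $x_0$ from the third recursion in (\ref{g3}). I assemble the five groups $\sigma_1 b_{-3}$, $\sigma_0 b_{-2}$, $-\sqrt{-1}\partial_{\xi_a}(\sigma_1)\partial_{x_a}(b_{-2})$, $-\sqrt{-1}\partial_{\xi_a}(\sigma_2)\partial_{x_a}(b_{-3})$, and $-\frac{1}{2}\partial_{\xi_a}\partial_{\xi_b}(\sigma_2)\partial_{x_a}\partial_{x_b}(b_{-2})$, each multiplied by $-\sigma_2^{-1}$. The scalar-curvature term $\frac{1}{4}|X|^{-2}|\xi|^{-4}s$ descends directly from the $-\frac{1}{4}|X|^2 s$ in $\sigma_0$ via $\sigma_0 b_{-2}$. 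The Ricci-type contraction $-\frac{2}{3}|X|^{-2}|\xi|^{-6}R_{\alpha a\alpha\mu}\xi_\mu\xi_a$ arises from the final bracket, where the second $x$-derivative $\partial_{x_a}\partial_{x_b}(|\xi|^{-2})$ in (\ref{mkmk}) introduces $\partial_{x_a}\partial_{x_b}g^{\alpha\beta}$, which I rewrite via the standard normal-coordinate identity expressing the second derivatives of the metric through the curvature tensor (the first derivatives having already dropped out by (\ref{mmnb})). The remaining terms are the products of $\partial_x(|X|^{-2})$ with the Clifford factors $c(X)\partial_x[c(X)]$ and the purely Clifford quartic and quadratic terms; collecting everything yields the ten terms of the third line of (\ref{g5}).

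The main obstacle is the bookkeeping in $b_{-4}$ caused by non-commutativity: the factors $c(X)$ and $\partial_{x_a}[c(X)]$ are Clifford-valued, so in products such as $c(X)\partial_{x_a}[c(X)]c(X)\partial_{x_j}[c(X)]$ the ordering must be preserved, and one must carefully separate the terms that collapse to scalars through $c(X)^2=-|X|^2$ from those that remain genuinely matrix-valued. Organizing the cancellations among $\sigma_1 b_{-3}$ and the derivative terms, and correctly converting the surviving second metric derivative into the curvature contraction $R_{\alpha a\alpha\mu}$, is where the bulk of the effort lies; once these are handled, the expression regroups into precisely the form stated in (\ref{g5}).
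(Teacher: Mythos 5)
Your proposal follows essentially the same route as the paper: the paper offers no written proof of Lemma \ref{glemma5} beyond invoking the recursion (\ref{g3}) together with the normal-coordinate relations (\ref{mmnb}), the derivative formulas (\ref{mkmk}) and the positive-order symbols (\ref{g4}), which is precisely the substitution you carry out, and your explicit $b_{-2}$ and $b_{-3}$ computations check against the stated formulas (including the factor $2$ from the two Clifford terms in $\sigma_1$ and the sign flip from $c(X)^2=-|X|^2$). One small caution for the $b_{-4}$ step: the curvature contraction $R_{\alpha a\alpha\mu}\xi_\mu\xi_a$ does not arise solely from the bracket $-\frac{1}{2}\partial_{\xi_a}\partial_{\xi_b}(\sigma_2)\partial_{x_a}\partial_{x_b}(b_{-2})$ as you state --- the term $-\sqrt{-1}\,\partial_{\xi_a}(\sigma_2)\partial_{x_a}(b_{-3})$ also produces second metric derivatives at $x_0$ through the $\partial_{x_\mu}g^{\alpha\beta}$ factor inside $b_{-3}$ (and $\sigma_0 b_{-2}$ contributes connection-derivative terms $\partial^\mu\sigma_\mu$, which drop only after taking traces), so these contributions must be collected together before applying the normal-coordinate identity for $\partial_{x_a}\partial_{x_b}g^{\alpha\beta}(x_0)$ in terms of the curvature tensor.
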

Substituting (\ref{g4}) and (\ref{g5}) into (\ref{g1}), the following result can be obtained.
\begin{align}\label{mnmn}
&\sigma[(c(X)D^2c(X))^{-m+1}]_{-2m}(x_0)\nonumber\\
&=\frac{1}{4}(m-1)(-1)^m|X|^{-2m+2}|\xi|^{-2m}s\nonumber\\
&-\frac{1}{3}(m^2-m)(-1)^m|X|^{-2m+2}|\xi|^{-2m-2}R_{\alpha a\alpha\mu}(x_0)\xi_\mu\xi_a\nonumber\\
&-\frac{1}{2}(m^2-m)(-1)^m|X|^{-2m+4}|\xi|^{-2m}\partial_{x_a}^2(|X|^{-2})\nonumber\\
&+\frac{2}{3}m(m^2-1)(-1)^m|X|^{-2m+4}|\xi|^{-2m-2}\partial_{x_j}\partial_{x_a}(|X|^{-2})\xi_j\xi_a\nonumber\\
&+\frac{1}{2}m(m^3-2m^2-m+2)(-1)^m|X|^{-2m+6}|\xi|^{-2m-2}\partial_{x_\mu}(|X|^{-2})\partial_{x_a}(|X|^{-2})
\xi_\mu\xi_a\nonumber\\
&-\frac{1}{3}m(m^2-3m+2)(-1)^m|X|^{-2m+6}|\xi|^{-2m}(\partial_{x_j}(|X|^{-2}))^2\nonumber\\
&-2m(m-1)(-1)^m|X|^{-2m}|\xi|^{-2m-2}\partial_{x_a}[c(X)]\partial_{x_j}[c(X)]\xi_a\xi_j\nonumber\\
&-2(m^3-2m^2+5m-4)(-1)^m|X|^{-2m+2}|\xi|^{-2m-2}\partial_{x_j}(|X|^{-2})c(X)\partial_{x_a}[c(X)]\xi_a\xi_j\nonumber\\
&+(m-1)(-1)^m|X|^{-2m}|\xi|^{-2m}c(X)\partial_{x_j}\partial_{x_j}[c(X)]\nonumber\\
&+m(m-1)(-1)^m|X|^{-2m+2}|\xi|^{-2m}\partial_{x_j}(|X|^{-2})c(X)\partial_{x_j}[c(X)]\nonumber\\
&-2m(m-1)(-1)^m|X|^{-2m}|\xi|^{-2m-2}c(X)\partial_{x_a}\partial_{x_j}[c(X)]\xi_a\xi_j\nonumber\\
&+2m(m-1)(-1)^m|X|^{-2m-2}|\xi|^{-2m-2}c(X)\partial_{x_j}[c(X)]c(X)\partial_{x_a}[c(X)]\xi_a\xi_j.
\end{align}
The following lemmas of traces in terms of the Clifford action are
very efficient for solving the perturbed second-order spectral triples-tpye II.
\begin{lem}\label{lemaaa}The following identities hold:
\begin{align}\label{lask}
&(1){\rm tr}\left[\partial_{x_j}[c(X)]\partial_{x_j}[c(X)]\right]=-\sum_{j=1}^n|\nabla_{e_j}^{TM}X|^2{\rm tr}[id];\nonumber\\
&(2){\rm tr}\left[c(X)\partial_{x_j}[c(X)]\right]=\frac{1}{2}|X|^4\partial_{x_j}(|X|^{-2}){\rm tr}[id];\nonumber\\
&(3){\rm tr}\left[c(X)\partial_{x_j}\partial_{x_j}[c(X)]\right]=\bigg(\frac{1}{2}|X|^4\partial_{x_j}\partial_{x_j}(|X|^{-2})-|X|^6\left(\partial_{x_j}(|X|^{-2})\right)^2+\sum_{j=1}^n|\nabla_{e_j}^{TM}X|^2\bigg){\rm tr}[id];\nonumber\\
&(4){\rm tr}\left[c(X)\partial_{x_j}[c(X)]c(X)\partial_{x_j}[c(X)]\right]=\bigg(\frac{1}{2}|X|^8\left(\partial_{x_j}(|X|^{-2})\right)^2-|X|^2\sum_{j=1}^n|\nabla_{e_j}^{TM}X|^2\bigg){\rm tr}[id].
\end{align}
\end{lem}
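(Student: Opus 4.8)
The plan is to reduce all four identities to two elementary algebraic facts together with one geometric reduction. The first fact is the Clifford relation $c(X)^2=-|X|^2\,id$, which follows at once from (\ref{kaka}) by writing $c(X)=\sum_i X^ic(e_i)$ in the orthonormal frame; the second is the cyclicity of the matrix trace on the spinor bundle, which in particular gives ${\rm tr}[c(Y)^2]=-|Y|^2\,{\rm tr}[id]$ for any vector $Y$. The geometric reduction, valid because every trace here is evaluated at the fixed point $x_0$ in normal coordinates, is that the derivatives of the orthonormal frame (equivalently the spin-connection terms) vanish at $x_0$, so that differentiating $c(X)=\sum_i X^ic(e_i)$ only sees the derivatives of the coefficients and yields $\partial_{x_j}[c(X)](x_0)=c(\partial_{x_j}X)(x_0)=c(\nabla^{TM}_{e_j}X)(x_0)$. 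I would record these three observations first, together with the chain-rule relations $\partial_{x_j}(|X|^{-2})=-|X|^{-4}\partial_{x_j}(|X|^2)$ and its second-order analogue, which will be used repeatedly to pass between derivatives of $|X|^2$ and of $|X|^{-2}$.

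Identity $(1)$ is then immediate from the geometric reduction: the left-hand side becomes ${\rm tr}[c(\nabla^{TM}_{e_j}X)^2]=-|\nabla^{TM}_{e_j}X|^2\,{\rm tr}[id]$ at $x_0$, and summing over $j$ gives the claim. For identity $(2)$ I would differentiate the operator identity $c(X)^2=-|X|^2$ once in $x_j$, obtaining $\partial_{x_j}[c(X)]\,c(X)+c(X)\,\partial_{x_j}[c(X)]=-\partial_{x_j}(|X|^2)\,id$; taking the trace and using cyclicity collapses the two terms into $2\,{\rm tr}[c(X)\partial_{x_j}[c(X)]]$, and the chain rule converts $-\frac12\partial_{x_j}(|X|^2)$ into $\frac12|X|^4\partial_{x_j}(|X|^{-2})$.

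Identities $(3)$ and $(4)$ build on the first two. For $(3)$ I would differentiate $c(X)^2=-|X|^2$ a second time to get $\partial_{x_j}^2[c(X)]\,c(X)+c(X)\,\partial_{x_j}^2[c(X)]+2\,\partial_{x_j}[c(X)]\partial_{x_j}[c(X)]=-\partial_{x_j}^2(|X|^2)\,id$, take the trace, and feed in identity $(1)$ to eliminate ${\rm tr}[(\partial_{x_j}[c(X)])^2]$; the remaining work is the purely scalar simplification $\frac12|X|^4\partial_{x_j}^2(|X|^{-2})-|X|^6(\partial_{x_j}(|X|^{-2}))^2=-\frac12\partial_{x_j}^2(|X|^2)$, again by the chain rule. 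Identity $(4)$ is the main computational obstacle, since it is a quartic trace; the key trick is to use the anticommutation relation $c(X)\,\partial_{x_j}[c(X)]+\partial_{x_j}[c(X)]\,c(X)=-\partial_{x_j}(|X|^2)\,id$ to move one factor of $c(X)$ past $\partial_{x_j}[c(X)]$, which turns $c(X)\partial_{x_j}[c(X)]c(X)\partial_{x_j}[c(X)]$ into $-\partial_{x_j}(|X|^2)\,c(X)\partial_{x_j}[c(X)]+|X|^2(\partial_{x_j}[c(X)])^2$ after applying $c(X)^2=-|X|^2$. Taking the trace and substituting identities $(1)$ and $(2)$ then produces exactly $\frac12(\partial_{x_j}(|X|^2))^2-|X|^2|\nabla^{TM}_{e_j}X|^2$ times ${\rm tr}[id]$, and the identity $|X|^8(\partial_{x_j}(|X|^{-2}))^2=(\partial_{x_j}(|X|^2))^2$ matches the stated form.

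The genuinely delicate point, rather than the algebra, is justifying the geometric reduction $\partial_{x_j}[c(X)](x_0)=c(\nabla^{TM}_{e_j}X)(x_0)$ that underlies the appearance of $|\nabla^{TM}_{e_j}X|^2$: one must verify that in normal coordinates the derivatives of the frame and hence of the Clifford symbols $c(e_i)$ drop out at $x_0$, and that the Christoffel symbols vanish there so the ordinary derivative of the components of $X$ coincides with the covariant derivative. Once this is in place, everything else is bookkeeping with the trace cyclicity and the chain rule relating $\partial^k(|X|^2)$ to $\partial^k(|X|^{-2})$, and the four identities follow in the order $(1)\to(2)\to(3),(4)$.
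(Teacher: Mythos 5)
Your proposal is correct and follows essentially the same route as the paper: identity (1) via the vanishing of the spin connection at $x_0$ so that $\partial_{x_j}[c(X)](x_0)=c(\nabla^{TM}_{e_j}X)(x_0)$, identities (2) and (3) by differentiating $c(X)^2=-|X|^2$ (equivalently, the paper differentiates the trace identity of (2), which is the same product-rule computation) together with the chain rule relating derivatives of $|X|^2$ and $|X|^{-2}$, and identity (4) by anticommuting $c(X)$ past $\partial_{x_j}[c(X)]$ and substituting (1) and (2). Your version even silently corrects the missing factor $|X|^2$ in the paper's intermediate line for (4), arriving at the same final formula.
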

\begin{proof}
(1) By $\nabla^{S(TM)}_{\partial_j}(x_0)=\partial_j+\sigma_j(x_0)=\partial_j,$ we get
\begin{align}
{\rm tr}\left[\partial_{x_j}[c(X)]\partial_{x_j}[c(X)]\right]&={\rm tr}\left[\nabla^{S(TM)}_{\partial_j}[c(X)]\nabla^{S(TM)}_{\partial_j}[c(X)]\right]\nonumber\\
&=\sum_{j=1}^n{\rm tr}\left[\nabla^{S(TM)}_{e_j}[c(X)]\nabla^{S(TM)}_{e_j}[c(X)]\right]\nonumber\\
&=\sum_{j=1}^n{\rm tr}\left[c(\nabla^{TM}_{e_j}X)c(\nabla^{TM}_{e_j}X)\right]\nonumber\\
&=-\sum_{j=1}^n|\nabla_{e_j}^{TM}X|^2{\rm tr}[id].
\end{align}
(2)By (\ref{kaka}) and ${\rm tr}[ab]={\rm tr}[ba]$, we have
$${\rm tr}\left[c(X)\partial_{x_j}[c(X)]\right]={\rm tr}\left[\partial_{x_j}(-|X|^{2})-\partial_{x_j}[c(X)]c(X)\right].$$
Then
\begin{align}
{\rm tr}\left[c(X)\partial_{x_j}[c(X)]\right]&={\rm tr}\left[\partial_{x_j}[c(X)]c(X)\right]\nonumber\\
&=-\frac{1}{2}\partial_{x_j}(|X|^{2}){\rm tr}[id]\nonumber\\
&=\frac{1}{2}|X|^4\partial_{x_j}(|X|^{-2}){\rm tr}[id].
 \end{align}
 (3)By (\ref{kaka}), we have
$${\rm tr}\left[c(X)\partial_{x_j}\partial_{x_j}[c(X)]\right]=\partial_{x_j}\bigg({\rm tr}\left[c(X)\partial_{x_j}[c(X)]\right]\bigg)-{\rm tr}\left[\partial_{x_j}[c(X)]\partial_{x_j}[c(X)]\right].$$
Then combining results (1) and (2), (3) holds.\\
(4)The relation of the Clifford action and ${\rm tr}[ab]={\rm tr}[ba]$ are used in the proof below.
\begin{align}
{\rm tr}\left[c(X)\partial_{x_j}[c(X)]c(X)\partial_{x_j}[c(X)]\right]&=-\partial_{x_j}(|X|^{2}){\rm tr}\left[c(X)\partial_{x_j}[c(X)]\right]+{\rm tr}\left[\partial_{x_j}[c(X)]\partial_{x_j}[c(X)]\right]\nonumber\\
&=\bigg(\frac{1}{2}\left(\partial_{x_j}(|X|^{2})\right)^2-|X|^2\sum_{j=1}^n|\nabla_{e_j}^{TM}X|^2\bigg){\rm tr}[id]\nonumber\\
&=\bigg(\frac{1}{2}|X|^8\left(\partial_{x_j}(|X|^{-2})\right)^2-|X|^2\sum_{j=1}^n|\nabla_{e_j}^{TM}X|^2\bigg){\rm tr}[id],
\end{align}
which ends the proof.
\end{proof}
By Lemma \ref{lemaaa} and (\ref{wer}), we get the following lemma.
\begin{lem}\label{ppp1}  In terms of the condition (\ref{lask}), the following identities hold:
\begin{align}\label{p1po}
&\int_{|\xi|=1}{\rm tr}\bigg(\sum_{j,l}\partial_{x_j}[c(X)]\partial_{x_l}[c(X)]\xi_j\xi_l\bigg)\sigma(\xi)=-\frac{1}{2m}\sum_{j=1}^n|\nabla_{e_j}^{TM}X|^2{\rm tr}[id]Vol(S^{n-1});\nonumber\\
&\int_{|\xi|=1}{\rm tr}\bigg(\sum_{j,l}\partial_{x_j}(|X|^{-2})c(X)\partial_{x_j}[c(X)]\bigg)\sigma(\xi)=\frac{1}{2}|X|^{4}|\nabla(|X|^{-2})|^2{\rm tr}[id]Vol(S^{n-1});\nonumber\\
&\int_{|\xi|=1}{\rm tr}\bigg(\sum_{j}c(X)\partial_{x_j}\partial_{x_j}[c(X)]\bigg)\sigma(\xi)\nonumber\\
&=\left(\sum_{j=1}^n|\nabla_{e_j}^{TM}X|^2-|X|^{6}|\nabla(|X|^{-2})|^2-\frac{1}{2}|X|^{4}\Delta(|X|^{-2})\right){\rm tr}[id]Vol(S^{n-1});\nonumber\\
&\int_{|\xi|=1}{\rm tr}\bigg(\sum_{j,l}c(X)\partial_{x_l}[c(X)]c(X)\partial_{x_j}[c(X)]\xi_j\xi_l\bigg)\sigma(\xi)\nonumber\\
&=\frac{1}{4m}\left(|X|^{8}|\nabla(|X|^{-2})|^2-2|X|^{2}\sum_{j=1}^n|\nabla_{e_j}^{TM}X|^2\right){\rm tr}[id]Vol(S^{n-1}),
\end{align}
where $\Delta(|X|^{-2})$ and $\nabla(|X|^{-2})$ denote a generalized laplacian of $|X|^{-2}$ and the gradient to $|X|^{-2}$ respectively.
\end{lem}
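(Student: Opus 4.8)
The plan is to obtain each of the four identities by feeding the purely algebraic trace computations of Lemma \ref{lemaaa} into the angular integral over the cosphere $\{|\xi|=1\}$. The essential structural point is that every trace appearing in (\ref{p1po}) is taken over the spinor fibre $S(TM)_{x_0}$ and is therefore independent of the covector $\xi$; only the scalar factors $\xi_j\xi_l$ (when present) carry $\xi$-dependence. Hence in each case I would first commute the trace and the finite index sums past the integral sign, reducing the problem to evaluating $\int_{|\xi|=1}\sigma(\xi)=Vol(S^{n-1})$ for the $\xi$-free terms, and applying the formula (\ref{wer}), $\int_{|\xi|=1}\xi_j\xi_l\,\sigma(\xi)=\tfrac{1}{2m}\delta_{jl}Vol(S^{n-1})$, for the $\xi$-bilinear terms.

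For the second and third identities the integrand carries no $\xi$, so the angular integral contributes only the overall factor $Vol(S^{n-1})$. Substituting part (2) of Lemma \ref{lemaaa} into the second line gives $\sum_j\partial_{x_j}(|X|^{-2})\,\tfrac{1}{2}|X|^4\partial_{x_j}(|X|^{-2})\,{\rm tr}[id]$, which at the base point $x_0$ in the chosen normal coordinates (where $g^{ij}(x_0)=\delta_i^j$) equals $\tfrac{1}{2}|X|^4|\nabla(|X|^{-2})|^2{\rm tr}[id]$, since there $\sum_j(\partial_{x_j}(|X|^{-2}))^2=|\nabla(|X|^{-2})|^2$. The third identity follows the same way from part (3) of Lemma \ref{lemaaa}, once one rewrites $\sum_j\partial_{x_j}\partial_{x_j}(|X|^{-2})=-\Delta(|X|^{-2})$ in accordance with the sign convention for the generalized Laplacian already fixed in Lemma \ref{ppp}.

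For the first and fourth identities the integrand is bilinear in $\xi$, so I would apply (\ref{wer}) to collapse $\sum_{j,l}{\rm tr}(\cdots)_{jl}\int_{|\xi|=1}\xi_j\xi_l\,\sigma(\xi)$ into $\tfrac{1}{2m}\sum_j{\rm tr}(\cdots)_{jj}\,Vol(S^{n-1})$, the Kronecker delta forcing $l=j$. The first identity is then immediate from part (1) of Lemma \ref{lemaaa}, and the fourth from part (4): in the latter case $\tfrac{1}{2m}\big(\tfrac{1}{2}|X|^8|\nabla(|X|^{-2})|^2-|X|^2\sum_j|\nabla_{e_j}^{TM}X|^2\big)$ regroups as $\tfrac{1}{4m}\big(|X|^8|\nabla(|X|^{-2})|^2-2|X|^2\sum_j|\nabla_{e_j}^{TM}X|^2\big)$, which is the asserted right-hand side.

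There is no genuine analytic difficulty here; the whole lemma is bookkeeping built on two inputs already in hand. The one place demanding care is consistency of conventions: the coordinate second derivatives $\sum_j\partial_{x_j}^2$ must be traded for $-\Delta$ and the coordinate sums of squares for $|\nabla(\cdot)|^2$ using the normal-coordinate relations (\ref{mmnb}) at $x_0$, so that the Laplacian sign agrees with the one implicitly adopted in (\ref{ppo}). Provided that convention is applied uniformly, the four equalities drop out directly.
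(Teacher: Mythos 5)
Your proposal is correct and follows exactly the route the paper takes: the paper derives this lemma by combining the trace identities of Lemma \ref{lemaaa} with the angular integral formula (\ref{wer}) (and $\int_{|\xi|=1}\sigma(\xi)=Vol(S^{n-1})$ for the $\xi$-free terms), which is precisely your bookkeeping argument, including the normal-coordinate identifications $\sum_j\partial_{x_j}^2=-\Delta$ and $\sum_j(\partial_{x_j}(\cdot))^2=|\nabla(\cdot)|^2$ at $x_0$.
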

By integrating each item of (\ref{mnmn}) and using (\ref{666}), we establish the following theorem.
\begin{thm}\label{11thmooo}
Let $M$ be a $2m$-dimensional oriented
compact spin manifold without boundary, then the Einstein-Hilbert action for perturbed second-order spectral triples-type II is given
 \begin{align}
{\rm Wres}[(c(X)D^2c(X))^{-m+1}]
&=(-1)^m2^m\int_M\bigg(\frac{1}{12}(m-1)|X|^{-2m+2}s+\frac{1}{6}(m^2-3m+2)|X|^{-2m+4}\Delta(|X|^{-2})\nonumber\\
&+\frac{1}{12m}(6m^3-m^4+m^2-30m+24)|X|^{-2m+6}|\nabla(|X|^{-2})|^2\bigg)Vol(S^{n-1})d{\rm Vol_M}.
 \end{align}
 \end{thm}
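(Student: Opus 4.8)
The plan is to exploit the noncommutative residue formula (\ref{666}): everything reduces to taking the fibrewise trace of the $(-2m)$-order symbol recorded in (\ref{mnmn}) and then integrating the outcome over the unit cosphere $\{|\xi|=1\}$. Since (\ref{mnmn}) was already obtained by feeding the symbol data of Lemmas \ref{glemma4} and \ref{glemma5} into the master expansion (\ref{g1}), the remaining task is purely to carry out the trace-and-integrate step for each of its twelve summands and then collect like terms.

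I would first sort the summands of (\ref{mnmn}) into two groups. The \emph{scalar} group depends only on $|X|^{-2}$, its first and second derivatives, the scalar curvature $s$, and the curvature component $R_{\alpha a\alpha\mu}$; for these the trace simply contributes the factor ${\rm tr}[{\rm id}]=2^m$, and the angular integration is performed with (\ref{wer}) together with the contraction $\int_{|\xi|=1}R_{\alpha a\alpha\mu}\xi_a\xi_\mu\,\sigma(\xi)=\frac{1}{2m}s\,Vol(S^{n-1})$ already exploited in Lemma \ref{ppp}. The \emph{Clifford} group carries products such as $\partial_{x_j}[c(X)]\partial_{x_l}[c(X)]$, $c(X)\partial_{x_j}\partial_{x_j}[c(X)]$, $\partial_{x_j}(|X|^{-2})c(X)\partial_{x_a}[c(X)]$ and $c(X)\partial_{x_j}[c(X)]c(X)\partial_{x_a}[c(X)]$; here I would first invoke Lemma \ref{lemaaa} to convert each Clifford trace into an expression in $|X|$, $\partial_{x_j}(|X|^{-2})$ and $\sum_j|\nabla_{e_j}^{TM}X|^2$, using (\ref{wer}) to reduce any factor $\xi_a\xi_j$ to $\frac{1}{2m}\delta_{aj}$ before applying the integral identities of Lemma \ref{ppp1}.

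Collecting the resulting contributions by tensorial type, the surviving structures are exactly $|X|^{-2m+2}s$, $|X|^{-2m+4}\Delta(|X|^{-2})$ and $|X|^{-2m+6}|\nabla(|X|^{-2})|^2$. For instance, the scalar-curvature coefficient is assembled from the first two summands of (\ref{mnmn}): the bare $s$-term contributes $\frac14(m-1)$, while the $R_{\alpha a\alpha\mu}$-term contributes $-\frac16(m-1)$ after the $\frac{1}{2m}$ contraction, and their sum $\frac{1}{12}(m-1)$ is precisely the coefficient in the statement. The coefficients of $\Delta(|X|^{-2})$ and $|\nabla(|X|^{-2})|^2$ are built analogously from the remaining summands; multiplying throughout by ${\rm tr}[{\rm id}]=2^m$ and the overall sign $(-1)^m$ then reproduces the claimed formula.

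The \textbf{main obstacle} is not any single integral but the bookkeeping needed to show that the terms proportional to $\sum_j|\nabla_{e_j}^{TM}X|^2$ cancel identically, since this quantity is absent from the final answer. Tracking these, items (1), (3) and (4) of Lemma \ref{lemaaa} feed the four Clifford summands of (\ref{mnmn}) involving $\partial[c(X)]\partial[c(X)]$, the two occurrences of $c(X)\partial^2[c(X)]$, and $c(X)\partial[c(X)]c(X)\partial[c(X)]$; each yields a multiple of $(m-1)(-1)^m|X|^{-2m}\sum_j|\nabla_{e_j}^{TM}X|^2$ with respective coefficients $+1,+1,-1,-1$, whose vanishing sum is exactly the consistency check guaranteeing that the functional depends only on $|X|$ and not on the full covariant derivative of $X$. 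Establishing this cancellation, together with the parallel accounting that collapses the numerous $|\nabla(|X|^{-2})|^2$ contributions into the single coefficient $\frac{1}{12m}(6m^3-m^4+m^2-30m+24)$, is where essentially all of the effort and all of the risk of sign errors reside.
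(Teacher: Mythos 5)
Your proposal is correct and follows essentially the same route as the paper: the paper's proof of this theorem consists precisely of taking the fibrewise trace of each summand of (\ref{mnmn}) via Lemma \ref{lemaaa}, integrating over the cosphere with (\ref{wer}) and Lemma \ref{ppp1}, and collecting coefficients. Your explicit verification that the four contributions proportional to $(m-1)(-1)^m|X|^{-2m}\sum_j|\nabla^{TM}_{e_j}X|^2$ enter with coefficients $+1,+1,-1,-1$ and cancel is a check the paper leaves implicit, and both your sample coefficient computation $\frac14(m-1)-\frac16(m-1)=\frac1{12}(m-1)$ for the scalar-curvature term and the factor ${\rm tr}[{\rm id}]=2^m$ agree with the stated result.
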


\section*{ Declarations}
\textbf{Ethics approval and consent to participate:} Not applicable.

\textbf{Consent for publication:} Not applicable.

\textbf{Availability of data and materials:} The authors confrm that the data supporting the findings of this study are available within the article.

\textbf{Competing interests:} The authors declare no competing interests.

\textbf{Author Contributions:} All authors contributed to the study conception and design. Material preparation,
data collection and analysis were performed by TW and YW. The first draft of the manuscript was written
by TW and all authors commented on previous versions of the manuscript. All authors read and approved
the final manuscript.
\section*{Acknowledgements}
This first author was supported by NSFC. No.12401059 and Liaoning Province Science and Technology Plan Joint Project 2023-BSBA-118. The second author was supported NSFC. No.11771070. The authors thank the referee for his (or her) careful reading and helpful comments.

\section*{References}

\end{document}